\documentclass[a4paper,12pt,reqno]{amsart}
\usepackage{amsfonts}
\usepackage{amsmath}
\usepackage{amssymb}
\usepackage[a4paper]{geometry}
\usepackage{mathrsfs}
\usepackage{hyperref}
\renewcommand\eqref[1]{(\ref{#1})} 
%
%
\setlength{\textwidth}{15.2cm}
\setlength{\textheight}{22.7cm}
\setlength{\topmargin}{0mm}
\setlength{\oddsidemargin}{3mm}
\setlength{\evensidemargin}{3mm}
\setlength{\footskip}{1cm}


\numberwithin{equation}{section}
\theoremstyle{plain}
\newtheorem{thm}{Theorem}[section]

\newtheorem{lem}[thm]{Lemma}

\theoremstyle{definition}



\begin{document}

   \title[Functional inequalities for $p$-sub-Laplacian ]
{Some functional inequalities for the fractional $p$-sub-Laplacian}

\author[A. Kassymov]{Aidyn Kassymov}
\address{
  Aidyn Kassymov:
  \endgraf
  \endgraf
  Institute of Mathematics and Mathematical Modeling
  \endgraf
  125 Pushkin str.
  \endgraf
  050010 Almaty
  \endgraf
  Kazakhstan
  \endgraf
  and
  \endgraf
  Al-Farabi Kazakh National University
  \endgraf
   71 Al-Farabi avenue
   \endgraf
   050040 Almaty
   \endgraf
   Kazakhstan
  {\it E-mail address} {\rm kassymov@math.kz}
  }

 \author[D. Suragan]{Durvudkhan Suragan}
\address{
	Durvudkhan Suragan:
	\endgraf
	Department of Mathematics
	\endgraf
	School of Science and Technology, Nazarbayev University
	\endgraf
	53 Kabanbay Batyr Ave, Astana 010000
	\endgraf
	Kazakhstan
	\endgraf
	{\it E-mail address} {\rm durvudkhan.suragan@nu.edu.kz}
}

\thanks{
The authors were supported in parts by the MESRK grant AP05130981 and the target program BR05236656.
}

     \keywords{fractional $p$-sub-Laplacian, fractional Sobolev inequality, fractional Hardy inequality,
     	Lyapunov-type inequality, homogeneous Lie groups.}
 \subjclass{22E30, 43A80.}

     \begin{abstract}
 In this paper we study the fractional Dirichlet $p$-sub-Laplacian in a Haar measurable set on homogeneous Lie groups.
 We prove fractional Sobolev and Hardy inequalities and we also present a Lyapunov-type inequality  for the  fractional $p$-sub-Laplacian. As a consequence of the Lyapunov-type inequality we show an estimate of the first eigenvalue in a quasi-ball for the  Dirichlet  fractional $p$-sub-Laplacian.
     \end{abstract}
     \maketitle

\section {Introduction}
\subsection{Homogenous Lie group}
 We recall that a family of dilations of a Lie algebra $\mathfrak{g}$ is a family of linear mappings of the form
\begin{equation}
D_{\lambda}=\text{Exp}(A\ln \lambda)=\sum^{\infty}_{k=0}(\ln (\lambda)A)^{k},
\end{equation}
where $A$ is a diagnalisable linear operator on $\mathfrak{g}$ with positive eigenvalues, and  $D_{\lambda}$ is a morphism of the Lie algebra $\mathfrak{g}$, that is, a linear mapping from $\mathfrak{g}$ to itself which respects to the Lie bracket:
\begin{equation}
\forall X, Y\in \mathfrak{g},\,\,\lambda>0,\,\,[D_{\lambda}X, D_{\lambda}Y]=D_{\lambda}[X,Y].
\end{equation}

A homogeneous group is a connected simply connected Lie group whose Lie algebra is
equipped with dilations. We
denote by
\begin{equation}
Q:=\text{Tr} A,
\end{equation}
the homogeneous dimension of a homogeneous group $\mathbb{G}$.

For any $\mathbb{G}$ there exists  homogeneous quasi-norm, which is a continuous non-negative function
\begin{equation}
\mathbb{G}\ni x\mapsto q(x)\in[0,\infty),
\end{equation}
with the properties

\begin{itemize}
\item[a)] $q(x)=q(x^{-1})$ for all $x\in\mathbb{G}$,
\item[b)] $q(\lambda x)=\lambda q(x)$ for all $x\in \mathbb{G}$ and $\lambda>0$,
\item[c)] $q(x)=0$ iff $x=0$.
\end{itemize}
Note that for the simplicity of notation, here and after we assume that the origin $0$ of $\mathbb{R}^{N}$ is the identity of $\mathbb{G}$. This assumption is not restrictive due to properties of isomorphic Lie groups.
We also use the following well-known polar decomposition on homogeneous Lie groups (see, e.g. \cite[Section 3.1.7]{FR}):
there is a (unique)
positive Borel measure $\sigma$ on the
unit quasi-sphere
\begin{equation}\label{EQ:sphere}
\omega_{Q}:=\{x\in \mathbb{G}:\,q(x)=1\},
\end{equation}
so that for every $f\in L^{1}(\mathbb{G})$ we have
\begin{equation}\label{EQ:polar}
\int_{\mathbb{G}}f(x)dx=\int_{0}^{\infty}
\int_{\omega_{Q}}f(ry)r^{Q-1}d\sigma(y)dr.
\end{equation}
Let $p>1$, $s\in(0,1)$ and $\mathbb{G}$ be a homogeneous Lie group with homogeneous dimension $Q$. For a (Haar) measurable and compactly supported function $u$ the fractional $p$-sub-Laplacian $(-\Delta_{p,q})^{s}$ on $\mathbb{G}$ can be defined as
\begin{equation}
(-\Delta_{p,q})^{s}u(x)=2\lim_{\delta\searrow 0}\int_{\mathbb{G} \setminus B_{q}(x,\delta)}\frac{|u(x)-u(y)|^{p-2}(u(x)-u(y))}{q^{Q+sp}(y^{-1}\circ x)}dy, \,\,\,x\in \mathbb{G},
\end{equation}
where $q$ is a quasi-norm on $\mathbb{G}$ and $B_{q}(x,\delta)$ is a quasi-ball with respect to $q$, with radius $\delta$ centered at $x\in\mathbb{G}$ .

For a measurable function $u:\mathbb{G}\rightarrow \mathbb{R}$ we define the Gagliardo seminorm by
\begin{equation}\label{gsmnm}
[u]_{s,p,q}=\left( \int_{\mathbb{G}} \int_{\mathbb{G}}\frac{|u(x)-u(y)|^{p}}{q^{Q+sp}(y^{-1}\circ x)}dxdy\right)^{1/p}.
\end{equation}
Now we recall the definition of the fractional Sobolev spaces on homogeneous Lie groups denoted
 by $W^{s,p,q}(\mathbb{G})$. For $p\geq1$ and $s\in(0,1)$, the functional space
\begin{equation}
W^{s,p,q}(\mathbb{G})=\{u\in L^{p}(\mathbb{G}): u \text {\,\,is measurable}, [u]_{s,p,q}<+\infty \},
\end{equation}
endowed with the norm
\begin{equation}
\|u\|_{W^{s,p,q}(\mathbb{G})}=(\|u\|_{L^{p}(\mathbb{G})}+[u]_{s,p,q})^{1/p},\,\,\,u\in W^{s,p,q}(\mathbb{G}),
\end{equation}
is called the fractional Sobolev spaces on $\mathbb{G}$.

Similarly, if $\Omega\subset \mathbb{G}$ is a Haar measurable set, we define the Sobolev space $W^{s,p,q}(\Omega)$
\begin{equation}
W^{s,p,q}(\Omega)=\{u\in L^{p}(\Omega): u \text {\,\,is measurable},\left( \int_{\Omega}\int_{\Omega}\frac{|u(x)-u(y)|^{p}}{q^{Q+sp}(y^{-1}\circ x)}dxdy\right)^{1/p}<+\infty\},
\end{equation}
endowed with norm
\begin{equation}
\|u\|_{W^{s,p,q}(\Omega)}=\left(\|u\|_{L^{p}(\Omega)}+\int_{\Omega}\int_{\Omega}\frac{|u(x)-u(y)|^{p}}{q^{Q+sp}(y^{-1}\circ x)}dxdy\right)^{1/p},\,\,\,u\in W^{s,p,q}(\Omega).
\end{equation}
Let define Sobolev space $W^{s,p,q}_{0}(\Omega)$  as the completion of $C^{\infty}_{0}(\Omega)$ with respect to the norm $\|u\|_{W^{s,p,q}(\Omega)}$.
\subsection {Fractional Sobolev inequality}
Let $\Omega\subset\mathbb{R}^{N}$ be a measurable set and $1<p<N$, then the (classical) Sobolev inequality is formulated as
\begin{equation}
\|u \|_{L^{p^*}(\Omega)}\leq C\|\nabla u \|_{L^{p}(\Omega)},\,\,\,u\in C^{\infty}_{0}(\Omega),
\end{equation}
where $C=C(N,p)>0$ is a positive constant, $p^{*}=\frac{Np}{N-p}$ and $ \nabla$ is a standard gradient in $\mathbb{R}^{N}$.

In \cite{DPV} the authors obtained the fractional Sobolev inequality in the case $N>sp$, $1<p<\infty,$ and $s\in(0,1)$, for any measurable and compactly supported function $u$ one has
\begin{equation}
\|u \|_{L^{p^*}(\mathbb{R}^{N})}\leq C [u]_{s,p}^{p},
\end{equation}
where $C=C(N,p,s)>0$ is a suitable constant, $[u]_{s,p}^{p}=\int_{\mathbb{R}^{N}}\int_{\mathbb{R}^{N}}
\frac{|u(x)-u(y)|^{p}}{|x-y|^{N+sp}}dxdy$ and $p^*=\frac{Np}{N-sp}$.
There is a number of generalisations and extensions of above Sobolev's inequality. For example, in \cite{Abd} the
authors proved the following weighted fractional Sobolev inequality:
Let $1<p<\frac{N}{s}$ and $0<\beta<\frac{N-ps}{2}$, then for all $u\in C^{\infty}_{0}(\mathbb{R}^{N})$ one has
\begin{equation}
C\int_{\mathbb{R}^{N}}\int_{\mathbb{R}^{N}}\frac{|u(x)-u(y)|^{p}}{|x-y|^{N+ps}|x|^{\beta}|y|^{\beta}}dxdy\geq \left(\int_{\mathbb{R}^{N}}\frac{|u|^{p^{*}}}{|x|^{\frac{2\beta p^{*}}{p}}}dx\right)^{\frac{p}{p^{*}}},
\end{equation}
where $C=C(N,p,s)>0$ and $p^*=\frac{Np}{N-sp}$.

The Sobolev inequality is one of the most important tools in PDE and variational problems. In this paper one of our aims is to obtain an analogue of the fractional Sobolev inequality on the homogeneous Lie groups. The result is stated in Theorem \ref{sob}.

\subsection {Fractional Hardy inequality}
In PDE the $L^{p}$-Hardy inequality takes form
\begin{equation}
\left\|\frac{u(x)}{|x|}\right\|_{L^{p}(\mathbb{R}^{N})}\leq\frac{p}{N-p}\|\nabla u \|_{L^{p}(\mathbb{R}^{N})}, \,\,1< p<N,
\end{equation}
where $u\in C^{\infty}_{0}(\mathbb{R}^{N})$ and $\nabla$ is the standard gradient in $\mathbb{R}^{N}$. In  \cite{Abd} the authors studied the weighted fractional $p$-Laplacian and established the following weighted fractional $L^{p}$-Hardy inequality
\begin{equation}
C\int_{\mathbb{R}^{N}}\frac{|u(x)|^{p}}{|x|^{ps+2\beta}}dx\leq\int_{\mathbb{R}^{N}}\int_{\mathbb{R}^{N}}\frac{|u(x)-u(y)|^{p}}{|x-y|^{N+ps}|x|^{\beta}|y|^{\beta}}dxdy,
\end{equation}
where $\beta<\frac{N-ps}{2}$, $u\in C_{0}^{\infty}(\mathbb{R}^{N})$ and $C>0$ is a positive constant. For  general Lie group discussions of Hardy type inequalities we refer to recent papers \cite{RSAdv17}, \cite{RSJDE17} and \cite{Ruzhansky-Suragan:squares} as well as references therein.
In the present paper  we established an analogue of the fractional Hardy inequality on the homogeneous Lie groups (see Theorem \ref{harun1}).

\subsection {Lyapunov-type inequality}
Historically, in Lyapunov's work \cite{Lyap} the following one-dimensional homogeneous Dirichlet boundary value problem was studied (for the second order ODE)

\begin{equation}\label{slayp}
 \begin{cases}
   u''(x)+\omega(x)u(x)=0,\,\,x\in(a,b),\\
   u(a)=u(b)=0,
 \end{cases}
\end{equation}
and it was proved that, if $u$ is a non-trivial solution of \eqref{slayp} and $\omega(x)$ is a real-valued and continuous function on $[a,b]$, then
 \begin{equation}\label{slaypin}
\int^{b}_{a}|\omega(x)|dx>\frac{4}{b-a}.
\end{equation}
Inequality \eqref{slaypin} is called a (classical) Lyapunov inequality. Nowadays, there are many extensions of above Lyapunov's inequality. In \cite{Ed} the author obtains the Lyapunov inequality for the one-dimensional Dirichlet $p$-Laplacian
\begin{equation}\label{playp}
 \begin{cases}
   (|u'(x)|^{p-2}u'(x))'+\omega(x)u(x)=0,\,\,x\in(a,b),\,\,\,1<p<\infty,\\
   u(a)=u(b)=0,
 \end{cases}
\end{equation}
where $\omega(x)\in L^{1}(a,b)$, so
 \begin{equation}\label{playpin}
\int^{b}_{a}|\omega(x)|dx>\frac{2^{p}}{(b-a)^{p-1}},\,\,\,1<p<\infty.
\end{equation}
Obviously, taking $p=2$ in \eqref{playpin}, we recover \eqref{slaypin}.

Recently in the paper \cite{Kir} the authors obtained interesting results concerning Lyapunov inequalities for the multi-dimesional fractional $p$-Laplacian $(-\Delta_{p})^{s}$, $1<p<\infty,\,\,s\in(0,1)$, with a homogeneous Dirichlet boundary condition, that is,
\begin{equation}\label{fplayp}
 \begin{cases}
   (-\Delta_{p})^{s}u=\omega(x)|u|^{p-2}u,\,\,x\in\Omega,\\
   u(x)=0,\,\,x\in \mathbb{R}^{N}\setminus\Omega,
 \end{cases}
\end{equation}
where $\Omega\subset \mathbb{R}^{N}$ is a measurable set, $1<p<\infty,$ and $s\in(0,1).$
Let us recall the following result of \cite{Kir}.
\begin{thm}
Let $\omega\in L^{\theta}(\Omega)$ with $N>sp,\,\,\frac{N}{sp}<\theta<\infty,$  be a non-negative weight. Suppose that problem \eqref{fplayp} has a non-trivial weak solution $u\in W^{s,p}_{0}(\Omega)$. Then
\begin{equation}\label{fplyapq>sp}
\left(\int_{\Omega}\omega^{\theta}(x) \,dx \right)^{\frac{1}{\theta}}>\frac{C}{r_{\Omega}^{sp-\frac{N}{\theta}}},
\end{equation}
where $C>0$ is a universal constant and $r_{\Omega}$ is the inner radius of $\Omega$.
\end{thm}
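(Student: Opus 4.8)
The plan is to convert the equation into an identity by testing it with its own solution, and then to turn that identity into the desired lower bound by combining the fractional Sobolev inequality of \cite{DPV}, an interpolation inequality, and a lower bound for the first Dirichlet eigenvalue of $(-\Delta_{p})^{s}$ in terms of the inner radius. First I would put the test function $\varphi=u$ into the weak formulation of \eqref{fplayp} (with $u$ extended by zero outside $\Omega$). Since $N>sp$, since $u\in W^{s,p}_{0}(\Omega)$ embeds into $L^{p^{*}}(\mathbb{R}^{N})$ with $p^{*}=\frac{Np}{N-sp}$, and since $p\theta'\le p^{*}$ precisely because $\theta\ge\frac{N}{sp}$ (with $\theta'=\frac{\theta}{\theta-1}$), every integral converges and one gets the identity $[u]_{s,p}^{p}=\int_{\Omega}\omega\,|u|^{p}\,dx$, to which H\"older's inequality with exponents $\theta$ and $\theta'$ gives
\[
[u]_{s,p}^{p}\le\Big(\int_{\Omega}\omega^{\theta}\,dx\Big)^{1/\theta}\,\|u\|_{L^{p\theta'}(\Omega)}^{p}.
\]

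Next, because $p\le p\theta'<p^{*}$, I would interpolate $\|u\|_{L^{p\theta'}(\Omega)}\le\|u\|_{L^{p}(\Omega)}^{1-t}\,\|u\|_{L^{p^{*}}(\mathbb{R}^{N})}^{t}$ with $t=\frac{N}{sp\theta}\in(0,1)$, bound $\|u\|_{L^{p^{*}}(\mathbb{R}^{N})}\le C\,[u]_{s,p}$ by the fractional Sobolev inequality, and substitute. Absorbing the powers of $[u]_{s,p}$ on the left and using that $u\in W^{s,p}_{0}(\Omega)\setminus\{0\}$, so that $[u]_{s,p}^{p}\ge\lambda_{1}^{s,p}(\Omega)\,\|u\|_{L^{p}(\Omega)}^{p}$, I arrive at
\[
\lambda_{1}^{s,p}(\Omega)^{\,1-t}\le C\Big(\int_{\Omega}\omega^{\theta}\,dx\Big)^{1/\theta}.
\]
Invoking then the lower bound $\lambda_{1}^{s,p}(\Omega)\ge c(N,p,s)\,r_{\Omega}^{-sp}$ and the arithmetic identity $sp(1-t)=sp-\frac{N}{\theta}$ turns this into \eqref{fplyapq>sp}. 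The strict inequality comes from the fact that the inequalities used (Sobolev, interpolation, H\"older, eigenvalue bound) cannot all be sharp for one function in $W^{s,p}_{0}(\Omega)$; for instance the extremals of the fractional Sobolev inequality are everywhere positive, hence are not supported in a set of finite inner radius.

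The step I expect to be the genuine obstacle is the eigenvalue estimate $\lambda_{1}^{s,p}(\Omega)\ge c(N,p,s)\,r_{\Omega}^{-sp}$, i.e.\ a fractional Friedrichs--Poincar\'e inequality with constant scaling like $r_{\Omega}^{sp}$. One cannot get this from the purely global Sobolev--H\"older chain, which only yields the measure $|\Omega|$ and, since $|\Omega|^{1/N}\ge c_{N}r_{\Omega}$, points the wrong way; the inner radius has to be exploited through the fact that no ball of radius larger than $r_{\Omega}$ lies inside $\Omega$. I would prove it by a covering argument, splitting $\Omega$ into pieces on each of which a Poincar\'e inequality holds with constant $\sim r_{\Omega}^{sp}$ because a fixed proportion of a neighbouring ball lies in $\Omega^{c}$, or else cite the known version of this estimate. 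A more elementary but technically heavier alternative, which bypasses the eigenvalue bound, uses the nonlocal De Giorgi--Nash--Moser regularity available under the Serrin-type assumption $\theta>\frac{N}{sp}$ to get $u\in L^{\infty}(\Omega)$, then picks a point $x_{0}$ where $|u|$ attains its maximum (necessarily with $\mathrm{dist}(x_{0},\Omega^{c})\le r_{\Omega}$) and compares $u$ with a barrier built from $\omega$ on the inscribed ball $B_{r_{\Omega}}(x_{0})$; there the nonlocal tail terms are the main nuisance.
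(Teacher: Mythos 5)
Your skeleton --- testing the equation with $u$ itself to get $[u]_{s,p}^{p}=\int_{\Omega}\omega|u|^{p}dx$, H\"older with exponents $\theta,\theta'$, interpolation of $L^{p\theta'}$ between two Lebesgue norms, and the fractional Sobolev inequality --- is exactly the architecture of the proof of the group analogue, Theorem \ref{thmlyap}, and your exponent bookkeeping ($t=\frac{N}{sp\theta}$, $sp(1-t)=sp-\frac{N}{\theta}$) is correct. The problem is the step you yourself flag as the obstacle: the lower bound $\lambda_{1}^{s,p}(\Omega)\geq c(N,p,s)\,r_{\Omega}^{-sp}$. You do not prove it, and the sketch you offer is not a routine fix: the covering argument in which ``a fixed proportion of a neighbouring ball lies in $\Omega^{c}$'' silently assumes a measure-density (plumpness) condition on $\Omega^{c}$ that a general measurable set need not satisfy; in the regime $N>sp$ the complement can meet every ball $B(x,2r_{\Omega})$ only in a set of very small measure (or capacity), so the constant produced by such an argument degenerates. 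As written, your proposal therefore reduces the theorem to an unproven inequality of comparable depth, and everything downstream of it is conditional.

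The paper closes precisely this hole with a fractional Hardy inequality rather than an eigenvalue estimate, and this is the genuine difference between the two routes. In the scheme followed here (and in \cite{Kir}), one never isolates $\lambda_{1}$: with $\beta=p\theta'$ and a suitable $\alpha\in(0,1)$ one writes
\begin{equation*}
r^{-\alpha sp}\int_{\Omega}|u|^{\beta}dx\;\leq\;\int_{\Omega}\frac{|u|^{\beta}}{\delta^{\alpha sp}(x)}\,dx\;\leq\;\Bigl(\int_{\Omega}\frac{|u|^{p}}{\delta^{sp}(x)}\,dx\Bigr)^{\alpha}\Bigl(\int_{\Omega}|u|^{p^{*}}dx\Bigr)^{1-\alpha},
\end{equation*}
where $\delta$ is the relevant distance weight (in Theorem \ref{thmlyap} it is the quasi-norm $q(x)\leq r_{\Omega,q}$, and Theorem \ref{harun1} controls the first factor; in the Euclidean statement it is $\mathrm{dist}(x,\partial\Omega)\leq r_{\Omega}$). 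Both factors are then bounded by powers of $[u]_{s,p}^{p}$ via the Hardy and Sobolev inequalities, and the radius enters only through the trivial pointwise bound on the weight. Since the Hardy inequality $\int_{\Omega}|u|^{p}\delta^{-sp}dx\leq C[u]_{s,p}^{p}$ immediately implies the Poincar\'e bound you want, the cleanest repair of your argument is to replace your eigenvalue step by this inequality (in the Euclidean setting it is available in \cite{Abd} and in the Frank--Seiringer framework \cite{FS}); after that substitution your chain and the paper's coincide up to a reparametrization ($\alpha=\theta'(1-t)$). Your closing remark on strictness is plausible but peripheral; the group analogue is in any case stated with $\geq$.
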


 One of our goals in this paper is to extend Lyapunov-type inequality \eqref{fplyapq>sp} for the fractional $p$-sub-Laplacian with homogeneous Dirichlet boundary condition (in the case $Q>sp$, $1<p<\infty$ and $s\in(0,1)$) on the homogeneous Lie groups. This result is given in Theorem \ref{thmlyap}.

 Summarizing our main results of this paper, we present the following facts:
\begin{itemize}
\item An analogue of the fractional Sobolev inequality on $\mathbb{G}$;
\item An analogue of the fractional Hardy inequality on $\mathbb{G}$;
\item A Lyapunov-type inequality for the fractional Dirichlet $p$-sub-Laplacian on $\mathbb{G}$;
\item  An estimate of the first eigenvalue for the  Dirichlet fractional $p$-sub-Laplacian on $\mathbb{G}$.

\end{itemize}
The paper is organized as follows. In Section \ref{SEC:2} we prove (sub-elliptic) functional inequalities such as analogues of the fractional Sobolev and Hardy inequalities on $\mathbb{G}$. In Section \ref{SEC:3} we prove a Lyapunov-type inequality for the fractional Dirichlet  $p$-sub-Laplacian. Then we also give an application of the Lyapunov-type inequality.

\section{Fractional Sobolev and Hardy inequalities}
\label{SEC:2}
In this section we prove fractional Sobolev and Hardy inequalities on the homogeneous Lie groups.

Let $Q$ be a homogeneous dimension of a homogeneous Lie group $\mathbb{G}$. To prove an analogue of the fractional Sobolev inequality, first we present some preliminary results. Here we follow a similar scheme as in \cite{DPV}, but now on the homogeneous Lie groups.

\begin{lem}\label{lem1}
Let $p>1$, $s\in(0,1)$ and $K\subset \mathbb{G}$ be Haar measurable set. Fix $x\in \mathbb{G}$ and a quasi-norm $q$ on $\mathbb{G}$, then we have
\begin{equation}
\int_{K^{c}}\frac{dy}{q^{Q+sp}(y^{-1}\circ x)}\geq C|K|^{-sp/Q},
\end{equation}
where $C=C(Q,s,p,q)$ is a positive constant, $K^{c}=\mathbb{G}\setminus K$ and $|K|$ is the Haar measure of $K$.
\end{lem}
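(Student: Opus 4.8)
The plan is to reduce the estimate to the case where $K$ is itself a quasi-ball centred at $x$ with the same Haar measure, exploiting the fact that the kernel $y\mapsto q(y^{-1}\circ x)^{-(Q+sp)}$ is ``radially'' decreasing in $q(y^{-1}\circ x)$. This is a bathtub-type rearrangement argument, following the Euclidean scheme of \cite{DPV} but using the dilation structure of $\mathbb{G}$ in place of Euclidean scaling.

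First I would dispose of the degenerate cases. If $|K|=0$, then $K^{c}$ equals $\mathbb{G}$ up to a null set and the left-hand side is $+\infty$ (the kernel fails to be integrable near $y=x$, since in polar coordinates it behaves like $r^{-1-sp}$), while the right-hand side is $+\infty$ as well; if $|K|=+\infty$, the right-hand side vanishes and there is nothing to prove. So assume $0<|K|<+\infty$. Let $c_{0}:=|B_{q}(0,1)|$; from the dilation property (b) of $q$, the homogeneity $|D_{\lambda}E|=\lambda^{Q}|E|$ of the Haar measure and its left-invariance, one gets $|B_{q}(x,\rho)|=\rho^{Q}c_{0}$ for every $x\in\mathbb{G}$ and $\rho>0$. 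Choose $\rho=(|K|/c_{0})^{1/Q}$, set $B:=\{y:q(y^{-1}\circ x)<\rho\}$ so that $|B|=|K|$, and write $f(y):=q(y^{-1}\circ x)^{-(Q+sp)}$.

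The core step is the rearrangement. Since $|B|=|K|$ we have $|B\setminus K|=|K\setminus B|$. On $B\setminus K$ one has $q(y^{-1}\circ x)<\rho$, hence $f(y)>\rho^{-(Q+sp)}$, whereas on $K\setminus B$ one has $q(y^{-1}\circ x)\ge\rho$, hence $f(y)\le\rho^{-(Q+sp)}$. Therefore
\[
\int_{B\setminus K} f\,dy \ \ge\ \rho^{-(Q+sp)}\,|B\setminus K| \ =\ \rho^{-(Q+sp)}\,|K\setminus B| \ \ge\ \int_{K\setminus B} f\,dy .
\]
Splitting $K^{c}=(B\setminus K)\cup(K^{c}\cap B^{c})$ and $B^{c}=(K\setminus B)\cup(K^{c}\cap B^{c})$ and cancelling the common integral over $K^{c}\cap B^{c}$ — which is finite, being dominated by $\int_{B^{c}}f\,dy$ computed below — yields $\int_{K^{c}}f\,dy\ge\int_{B^{c}}f\,dy$; if instead $\int_{B\setminus K}f\,dy=+\infty$, the claimed bound is immediate.

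It remains to evaluate $\int_{B^{c}}f\,dy$. The substitution $y=x\circ z$ preserves the Haar measure and gives $y^{-1}\circ x=z^{-1}$, so by property (a) one has $q(y^{-1}\circ x)=q(z)$ and $B^{c}$ becomes $\{z:q(z)\ge\rho\}$. The polar decomposition \eqref{EQ:polar} then gives
\[
\int_{B^{c}} f\,dy \ =\ \int_{\{q(z)\ge\rho\}} \frac{dz}{q(z)^{Q+sp}} \ =\ \int_{\rho}^{\infty} \int_{\omega_{Q}} r^{-1-sp}\,d\sigma(\theta)\,dr \ =\ \frac{\sigma(\omega_{Q})}{sp}\,\rho^{-sp}.
\]
Inserting $\rho=(|K|/c_{0})^{1/Q}$ produces the asserted inequality with $C=\tfrac{\sigma(\omega_{Q})}{sp}\,c_{0}^{sp/Q}$, a constant depending only on $Q$, $s$, $p$ and $q$. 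The only genuinely delicate point I expect is the rearrangement step: one must check that the cancellation of the tail $\int_{K^{c}\cap B^{c}}f$ is legitimate (it is, by the finiteness established above) and handle the possibility that $f$ is non-integrable on $B\setminus K$ (in which case the left-hand side is infinite and the inequality trivial). Everything else is routine bookkeeping with the homogeneous structure.
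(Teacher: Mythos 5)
Your proposal is correct and follows essentially the same route as the paper: choose the quasi-ball $B$ centred at $x$ with $|B|=|K|$, compare $\int_{B\setminus K}f$ with $\int_{K\setminus B}f$ via the monotonicity of the kernel across the level $q(y^{-1}\circ x)=\rho$ to deduce $\int_{K^{c}}f\geq\int_{B^{c}}f$, and evaluate the latter by the polar decomposition \eqref{EQ:polar}. Your treatment is in fact slightly more careful than the paper's (explicit degenerate cases $|K|\in\{0,+\infty\}$, the finiteness needed to cancel the common tail over $K^{c}\cap B^{c}$, and the explicit constant $C=\frac{\sigma(\omega_{Q})}{sp}c_{0}^{sp/Q}$), but these are refinements of the same argument rather than a different one.
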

\begin{proof}
Let $\delta:=\left(\frac{|K|}{\omega_{Q}}\right)^{1/Q},$ where $\omega_{Q}$ is a surface measure of the unit quasi-ball on $\mathbb{G}$. Then, we have
 \begin{equation}\label{3.1.1}
|K^{c}\cap B_{q}(x,\delta)|=|B_{q}(x,\delta)|-|K\cap B_{q}(x,\delta)|=|K|-|K\cap B_{q}(x,\delta)|=|K\cap B_{q}^{c}(x,\delta)|,
\end{equation}
where $|\cdot|$ is the Haar measure on $\mathbb{G}$ and $B_{q}(x,\delta)$ is a quasi-ball centered at $x$ with radius $\delta$.
Then,
\begin{align*}
	\int_{K^{c}}\frac{dy}{q^{Q+sp}(y^{-1}\circ x)}  &=\int_{K^{c}\cap B_{q}(x,\delta)}\frac{dy}{q^{Q+sp}(y^{-1}\circ x)}+\int_{K^{c}\cap B_{q}^{c}(x,\delta)}\frac{dy}{q^{Q+sp}(y^{-1}\circ x)}
	\\  &
	\geq \int_{K^{c}\cap B_{q}(x,\delta)}\frac{dy}{\delta^{Q+sp}}+\int_{K^{c}\cap B_{q}^{c}(x,\delta)}\frac{dy}{q^{Q+sp}(y^{-1}\circ x)}
	\\  &
	=\frac{|K^{c}\cap B_{q}(x,\delta)|}{\delta^{Q+sp}}+\int_{K^{c}\cap B_{q}^{c}(x,\delta)}\frac{dy}{q^{Q+sp}(y^{-1}\circ x)}.
\end{align*}
By using \eqref{3.1.1} we establish
\begin{align*}
\int_{K^{c}}\frac{dy}{q^{Q+sp}(y^{-1}\circ x)} & \geq \frac{|K^{c}\cap B_{q}(x,\delta)|}{\delta^{Q+sp}}+\int_{K^{c}\cap B_{q}^{c}(x,\delta)}\frac{dy}{q^{Q+sp}(y^{-1}\circ x)}
\\ &
= \frac{|K\cap B_{q}^{c}(x,\delta)|}{\delta^{Q+sp}}+\int_{K^{c}\cap B_{q}^{c}(x,\delta)}\frac{dy}{q^{Q+sp}(y^{-1}\circ x)}
\\ &
\geq \int_{K\cap B_{q}^{c}(x,\delta)}\frac{dy}{q^{Q+sp}(y^{-1}\circ x)}+\int_{K^{c}\cap B_{q}^{c}(x,\delta)}\frac{dy}{q^{Q+sp}(y^{-1}\circ x)} \\ & =\int_{B_{q}^{c}(x,\delta)}\frac{dy}{q^{Q+sp}(y^{-1}\circ x)}.
\end{align*}
Now using the polarization formula \eqref{EQ:polar} we obtain that
\begin{equation}
\int_{K^{c}}\frac{dy}{q^{Q+sp}(y^{-1}\circ x)}\geq C|K|^{-sp/Q}.
\end{equation}
\end{proof}
\begin{lem}[\cite{DPV}, Lemma 6.2]\label{sob2}
Fix $T>1$. Let $p>1$ and $s\in(0,1)$ be such that $Q>sp$, $m\in\mathbb{Z}$ and $a_{k}$ be a bounded, decreasing, nonnegative sequence with $a_{k}=0$ for any $k\geq m$. Then
\begin{equation*}
\sum_{k\in\mathbb{Z}}a_{k}^{(Q-sp)/Q}T^{k}\leq C\sum_{k\in\mathbb{Z},\,{a_{k}\neq0}}a_{k+1}a_{k}^{-sp/Q}T^{k},
\end{equation*}
for a positive constant $C=C(Q,s,p,T)>0$.
\end{lem}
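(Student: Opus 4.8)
Since the statement is quoted verbatim from \cite[Lemma 6.2]{DPV}, the task is to reproduce that (elementary) argument, and here is how I would carry it out. Write $\beta := \frac{Q-sp}{Q}$, so that $\beta \in (0,1)$ because $Q > sp$, and note $-\frac{sp}{Q} = \beta - 1$. Denote by $S := \sum_{k\in\mathbb{Z}} a_k^{\beta}T^k$ the left-hand side and by $\Sigma := \sum_{k\in\mathbb{Z},\,a_k\neq 0} a_{k+1}a_k^{\beta - 1}T^k$ the right-hand side; the goal is $S \le C\,\Sigma$. First I would record that $S$ is finite: since $0 \le a_k \le \sup_j a_j =: M$ and $a_k = 0$ for $k \ge m$, one gets $S \le M^{\beta}\sum_{k<m}T^k = M^{\beta}T^{m}/(T-1) < \infty$, using $T>1$. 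Note also that only the indices with $a_k\neq 0$ contribute to $S$.

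Next I would fix a threshold $\theta \in (0,1)$ with $\theta^{\beta} < 1/T$ — possible precisely because $T>1$, e.g.\ $\theta := \tfrac12 T^{-Q/(Q-sp)}$ — and split the set of indices with $a_k\neq 0$ into $A := \{k : a_{k+1} \ge \theta a_k\}$ and $B := \{k : a_{k+1} < \theta a_k\}$. On $A$ the right-hand summand already dominates the left one: $a_{k+1}a_k^{\beta-1}T^k \ge \theta\, a_k^{\beta}T^k$, so summing over $A$ and then enlarging the index set back to all $k$ with $a_k\neq 0$ (the added terms being nonnegative) yields $\sum_{k\in A} a_k^{\beta}T^k \le \theta^{-1}\Sigma$.

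The delicate part is the set $B$, where $a_{k+1}a_k^{\beta-1}T^k$ can be arbitrarily small — indeed it vanishes at the largest index where $a_k\neq 0$ — so it cannot be compared to $\Sigma$; instead it must be charged against $S$ itself. Here I would use that $a_{k+1} < \theta a_k$ forces $a_{k+1}^{\beta} < \theta^{\beta}a_k^{\beta}$, hence $a_k^{\beta} \le (1-\theta^{\beta})^{-1}\bigl(a_k^{\beta} - a_{k+1}^{\beta}\bigr)$; multiplying by $T^k$, summing over $B$ and enlarging to all $k\in\mathbb{Z}$ (the increments $a_k^{\beta}-a_{k+1}^{\beta}$ being nonnegative since $(a_k)$ is decreasing) gives $\sum_{k\in B}a_k^{\beta}T^k \le (1-\theta^{\beta})^{-1}\sum_{k\in\mathbb{Z}}(a_k^{\beta}-a_{k+1}^{\beta})T^k$. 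A shift of index shows $\sum_{k}a_{k+1}^{\beta}T^k = T^{-1}S$, so $\sum_k (a_k^{\beta}-a_{k+1}^{\beta})T^k = (1-T^{-1})S$, whence $\sum_{k\in B}a_k^{\beta}T^k \le \mu S$ with $\mu := \frac{1-T^{-1}}{1-\theta^{\beta}} < 1$ by the choice of $\theta$.

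Finally I would combine the two estimates: $S = \sum_{k\in A}a_k^{\beta}T^k + \sum_{k\in B}a_k^{\beta}T^k \le \theta^{-1}\Sigma + \mu S$, and since $\mu<1$ and $S<\infty$ this rearranges to $S \le \frac{1}{\theta(1-\mu)}\Sigma$, which is the claim with $C = \frac{1}{\theta(1-\mu)}$ depending only on $Q,s,p,T$ (through $\beta$ and $\theta$). I expect the only real obstacle to be the handling of $B$: recognizing that a universal threshold such as $\theta=\tfrac12$ does \emph{not} work for arbitrary $T$, that $\theta$ must be tuned to $T$ so that the telescoped remainder is a \emph{strict} fraction of $S$, and that finiteness of $S$ is needed in order to absorb. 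Everything else is routine manipulation of nonnegative series.
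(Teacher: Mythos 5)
The paper does not prove this lemma at all: it is imported verbatim by citation to \cite[Lemma~6.2]{DPV}, so there is no in-paper argument to compare against. Your self-contained proof is correct: the dichotomy between indices where $a_{k+1}\ge\theta a_k$ (charged to the right-hand side) and where $a_{k+1}<\theta a_k$ (telescoped and absorbed into the finite left-hand side, using that $\theta$ is tuned so that $\tfrac{1-T^{-1}}{1-\theta^{\beta}}<1$) is exactly the absorption scheme of the original [DPV] argument, and you correctly identify the two points where care is needed, namely the finiteness of $S$ before absorbing and the $T$-dependence of the threshold $\theta$.
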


\begin{lem}\label{3.3}
Let $p>1$, $s\in(0,1),$ $Q>sp$ and $q$ be a quasi-norm on $\mathbb{G}$. Let $u\in L^{\infty}(\mathbb{G})$ be compactly supported and  $a_{k}:=|\{|u|>2^{k}\}|$ for any $k\in\mathbb{Z}$. Then,
\begin{equation}
C\sum_{k\in\mathbb{Z},\,{a_{k}\neq0}}a_{k+1}a^{-sp/Q}_{k}2^{kp}\leq[u]^{p}_{s,p,q},
\end{equation}
where $C=C(Q,p,s,q)$ is a positive constant and $[u]_{s,p,q}$ is defined by \eqref{gsmnm}.
\end{lem}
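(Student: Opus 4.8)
The plan is to mimic the Euclidean argument in \cite[Lemma 6.3]{DPV}, decomposing the double integral defining $[u]^{p}_{s,p,q}$ into dyadic annuli determined by the level sets of $|u|$, and then bounding the contribution of each annulus from below. Set $A_{k}:=\{x\in\mathbb{G}:\,|u|>2^{k}\}$, so that $a_{k}=|A_{k}|$; by the compact support and boundedness of $u$ only finitely many $a_{k}$ are nonzero, and $(a_{k})$ is decreasing. Introduce the ``dyadic shells'' $D_{k}:=A_{k}\setminus A_{k+1}=\{2^{k}<|u|\le 2^{k+1}\}$, so $\mathbb{G}$ (modulo the null set $\{u=0\}$ and the set where $|u|$ attains a dyadic value, which can be absorbed) is the disjoint union of the $D_{k}$.

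First I would discard the off-diagonal cancellation and write
\[
[u]^{p}_{s,p,q}\ge \sum_{i\in\mathbb{Z}}\sum_{j<i}\int_{D_{i}}\int_{D_{j}}\frac{|u(x)-u(y)|^{p}}{q^{Q+sp}(y^{-1}\circ x)}\,dy\,dx,
\]
keeping only pairs of shells with $j<i$. On $D_{i}\times D_{j}$ with $j\le i-1$ one has $|u(x)-u(y)|\ge 2^{i}-2^{j+1}\ge 2^{i}-2^{i}=0$ — which is too crude — so instead I would only keep the terms with $j\le i-2$ (or, more cleanly, compare $x\in D_{i}$ against $y$ lying in the complement of $A_{i}$, where $|u(y)|\le 2^{i}$ so $|u(x)-u(y)|> 2^{i}-2^{i}$ is again degenerate; the correct bound is to use $x\in A_{i+1}$, i.e. $|u(x)|>2^{i+1}$, against $y\notin A_{i}$, i.e. $|u(y)|\le 2^{i}$, giving $|u(x)-u(y)|>2^{i}$). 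Concretely: for each $k$ with $a_{k}\ne 0$,
\[
[u]^{p}_{s,p,q}\ge \sum_{k}\int_{A_{k+1}}\int_{A_{k}^{c}}\frac{|u(x)-u(y)|^{p}}{q^{Q+sp}(y^{-1}\circ x)}\,dy\,dx
\ge \sum_{k}2^{kp}\int_{A_{k+1}}\!\left(\int_{A_{k}^{c}}\frac{dy}{q^{Q+sp}(y^{-1}\circ x)}\right)dx,
\]
where I must be careful that the same diagonal region is not counted twice across different $k$; this is handled exactly as in \cite{DPV} by noting the shells $D_{k}$ partition the domain, so at the cost of a universal constant the sum over $k$ of these ``$A_{k+1}$ vs $A_{k}^{c}$'' integrals is dominated by $[u]^{p}_{s,p,q}$.

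Next I would apply Lemma \ref{lem1} with $K=A_{k}$: for every $x$ (in particular for $x\in A_{k+1}$),
\[
\int_{A_{k}^{c}}\frac{dy}{q^{Q+sp}(y^{-1}\circ x)}\ge C\,|A_{k}|^{-sp/Q}=C\,a_{k}^{-sp/Q}.
\]
Integrating this over $x\in A_{k+1}$ contributes the factor $|A_{k+1}|=a_{k+1}$, so each term is bounded below by $C\,a_{k+1}a_{k}^{-sp/Q}2^{kp}$, and summing over $k$ with $a_{k}\ne 0$ yields
\[
C\sum_{k\in\mathbb{Z},\,a_{k}\ne 0}a_{k+1}a_{k}^{-sp/Q}2^{kp}\le [u]^{p}_{s,p,q},
\]
which is the claim, with $C=C(Q,p,s,q)>0$.

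The main obstacle I anticipate is the bookkeeping in the decomposition step: ensuring that when one sums the ``near-diagonal'' lower bounds over all dyadic levels $k$, the overlapping regions of $\mathbb{G}\times\mathbb{G}$ are each used a bounded number of times, so that no factor worse than a universal constant is lost. This is purely combinatorial and is exactly the content of \cite[Lemma 6.3]{DPV}; the only genuinely new ingredient here is that the Euclidean estimate $\int_{K^{c}}|x-y|^{-N-sp}\,dy\gtrsim |K|^{-sp/Q}$ is replaced by its homogeneous-group analogue, which is precisely Lemma \ref{lem1} and relies on the polar decomposition \eqref{EQ:polar}. Everything else — monotonicity of $(a_{k})$, finiteness of the support, the triangle inequality $|u(x)-u(y)|>2^{k}$ on the relevant region — transfers verbatim.
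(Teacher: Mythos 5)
Your overall strategy --- lower-bounding $[u]^{p}_{s,p,q}$ by integrating over $A_{k+1}\times A_{k}^{c}$, applying Lemma \ref{lem1} with $K=A_{k}$ to produce the factor $a_{k}^{-sp/Q}$, and summing over $k$ --- is viable, but the justification you give for the crucial summation step is wrong, and the reference you lean on does not supply it. The sets $A_{k+1}\times A_{k}^{c}$ are \emph{not} boundedly overlapping: a pair $(x,y)$ with $x\in D_{i}$ and $y\in D_{j}$ lies in $A_{k+1}\times A_{k}^{c}$ for every $k$ with $j+1\le k\le i-1$, i.e.\ for $i-j-1$ values of $k$, which grows without bound as the dyadic gap between $|u(x)|$ and $|u(y)|$ grows. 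Consequently the first inequality in your displayed chain, $[u]^{p}_{s,p,q}\ge\sum_{k}\int_{A_{k+1}}\int_{A_{k}^{c}}|u(x)-u(y)|^{p}q^{-Q-sp}(y^{-1}\circ x)\,dy\,dx$, is false as written, and ``the overlapping regions are each used a bounded number of times'' cannot be the mechanism that rescues it. What actually rescues it is the geometric decay of the weights: once $|u(x)-u(y)|^{p}$ has been replaced by $2^{kp}$, what you need is
\begin{equation*}
\sum_{k\in\mathbb{Z}}2^{kp}\int_{A_{k+1}}\int_{A_{k}^{c}}\frac{dy\,dx}{q^{Q+sp}(y^{-1}\circ x)}\le C\,[u]^{p}_{s,p,q},
\end{equation*}
and this holds because on each (disjoint) block $D_{i}\times D_{j}$ with $j\le i-2$ the accumulated weight is $\sum_{k=j+1}^{i-1}2^{kp}\le\tfrac{2^{ip}}{2^{p}-1}\le\tfrac{2^{p}}{2^{p}-1}\,|u(x)-u(y)|^{p}$ (using $|u(x)-u(y)|\ge 2^{i}-2^{j+1}\ge 2^{i-1}$ there), while blocks with $j=i-1$ contribute an empty range of $k$. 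Inserting this geometric-series estimate closes your proof; without it, the proposal has a genuine hole at its central step.

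You should also be aware that the paper (following \cite[Lemma 6.3]{DPV}) avoids the overlap issue by a different mechanism, so ``handled exactly as in \cite{DPV}'' is not available to you. There one integrates only over the disjoint shells $D_{i}\times D_{j}$, $j\le i-2$; the price is that the $x$-integration yields the shell measure $d_{i}=|D_{i}|$ rather than the level-set measure $a_{i}$, and most of the proof is then an absorption argument: write $d_{i}=a_{i}-\sum_{l\ge i+1}d_{l}$, introduce the auxiliary convergent series $S=\sum_{l}2^{lp}a_{l-1}^{-sp/Q}d_{l}$, show that the resulting error term is at most $S$ and that $S$ is itself dominated by the double integral, and absorb it into the left-hand side. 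Either repair --- your route completed by the geometric summation above, or the paper's $d_{i}$-to-$a_{i}$ absorption --- yields the lemma; as submitted, neither step is actually carried out.
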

\begin{proof}
We define
\begin{equation}
A_{k}:=\{|u|>2^{k}\},\;k\in\mathbb{Z},
\end{equation}
and
\begin{equation}
D_{k}:=A_{k}\setminus A_{k+1}=\{2^{k}<f\leq2^{k+1}\}\,\,\, \text{and}\,\,\, d_{k}=|D_{k}|.
\end{equation}
Since $A_{k+1}\subseteq A_{k}$, it is easy to see
\begin{equation}\label{ak+1<ak}
a_{k+1}\leq a_{k}.
\end{equation}

By the assumption $u\in L^{\infty}(\mathbb{G})$ is compactly supported, $a_{k}$ and $d_{k}$ are bounded and vanish when $k$ is large enough. Also, we notice that the $D_{k}$'s are disjoint, therefore,
\begin{equation}\label{dl=cak}
\bigcup_{l\in\mathbb{Z},\,\,l\leq k}D_{l}=A^{c}_{k+1}
\end{equation}
and
\begin{equation}\label{dl=ak}
\bigcup_{l\in\mathbb{Z},\,\,l\geq k }D_{l}=A_{k}.
\end{equation}
From \eqref{dl=ak} we obtain that
\begin{equation}\label{sdl=ak}
\sum_{l\in\mathbb{Z},\,\,l\geq k }d_{l}=a_{k}
\end{equation}
and
\begin{equation}\label{dk=ak-sumdl}
d_{k}=a_{k}-\sum_{l\in\mathbb{Z},\,\,l\geq k+1 }d_{l}.
\end{equation}
Since  $a_{k}$ and $d_{k}$ are bounded and vanish when $k$ is large enough, \eqref{sdl=ak} and \eqref{dk=ak-sumdl} are convergent. We define the convergent series
\begin{equation}\label{sums}
S:=\sum_{\l\in\mathbb{Z},\,\,a_{l-1}\neq0}2^{lp}a_{l-1}^{-sp/Q}d_{l}.
\end{equation}
We have that $D_{k}\subseteq A_{k}\subseteq A_{k-1}$, therefore, $a_{i-1}^{-sp/Q}d_{l}\leq a_{i-1}^{-sp/Q}a_{l-1}$. Thus,
\begin{equation}\label{subseteq}
\{(i,l)\in\mathbb{Z}\,\,\text{s.t.}\,\,a_{i-1}\neq0\,\, \text{and}\,\,\,a^{-sp/Q}_{i-1}d_{l}\neq0 \}\subseteq\{(i,l)\in\mathbb{Z}\,\,\text{s.t.}\,\,a_{l-1}\neq0\}.
\end{equation}
By using \eqref{subseteq} and \eqref{ak+1<ak}, we calculate that

\begin{equation*}
\sum_{i\in\mathbb{Z},\,\,a_{i-1}\neq0}\sum_{l\in\mathbb{Z},\,\,l\geq i+1}2^{ip}a^{-sp/Q}_{i-1}d_{l}=\sum_{i\in\mathbb{Z},\,\,a_{i-1}\neq0}\sum_{l\in\mathbb{Z},\,\,l\geq i+1,\,\,a^{sp/Q}d_{l}\neq0}2^{ip}a^{-sp/Q}_{i-1}d_{l}
\end{equation*}

\begin{equation*}
\leq\sum_{i\in\mathbb{Z}}\sum_{l\in\mathbb{Z},\,\,l\geq i+1,\,\,a_{l-1}\neq0}2^{ip}a^{-sp/Q}_{i-1}d_{l}=\sum_{l\in\mathbb{Z},\,\,a_{l-1}\neq0}\sum_{i\in\mathbb{Z},\,\,i\leq l-1}2^{ip}a^{-sp/Q}_{i-1}d_{l}
\end{equation*}
\begin{equation}\label{14}
\leq\sum_{l\in\mathbb{Z},\,\,a_{l-1}\neq0}\sum_{i\in\mathbb{Z},\,\,i\leq l-1}2^{ip}a^{-sp/Q}_{l-1}d_{l}=\sum_{l\in\mathbb{Z},\,a_{l-1}\neq0}\sum^{+\infty}_{k=0}2^{p(l-1-k)}a_{l-1}^{-sp/Q}d_{l}\leq S.
\end{equation}

Notice that
\begin{equation*}
||u(x)|-|u(y)||\leq|u(x)-u(y)|,
\end{equation*}
for any $x,y \in \mathbb{G}.$
If we fix $i\in\mathbb{Z}$ and $x\in D_{i}$, then for any $j\in\mathbb{Z}$ with $j\leq i-2$, for any $y\in D_{j}$ using the above inequality, we obtain that
\begin{equation*}
|u(x)-u(y)|\geq2^{i}-2^{j+1}\geq2^{i}-2^{i-1}\geq2^{i-1}
\end{equation*}
and using \eqref{dl=cak}, we have
\begin{multline}\label{14,5}
\sum_{j\in\mathbb{Z},\,j\leq i-2}\int_{D_{j}}\frac{|u(x)-u(y)|^{p}}{q^{Q+sp}(y^{-1}\circ x)}dy\geq2^{(i-1)p}\sum_{j\in\mathbb{Z},\,j\leq i-2}\int_{D_{j}}\frac{dy}{q^{Q+sp}(y^{-1}\circ x)}\\
=2^{(i-1)p}\int_{A^{c}_{i-1}}\frac{dy}{q^{Q+sp}(y^{-1}\circ x)}.
\end{multline}
Now using \eqref{14,5} and Lemma \ref{lem1}, we obtain that
\begin{equation*}
\sum_{j\in\mathbb{Z},\,j\leq i-2}\int_{D_{j}}\frac{|u(x)-u(y)|^{p}}{q^{Q+sp}(y^{-1}\circ x)}dy\geq C 2^{ip}a_{i-1}^{-sp/Q},
\end{equation*}
with a positive constant $C$. That is,
for any $i\in\mathbb{Z}$, we have
\begin{equation}\label{15}
\sum_{j\in\mathbb{Z},\,\,j\leq i-2}\int_{D_{i}}\int_{D_{j}}\frac{|u(x)-u(y)|^{p}}{q^{Q+sp}(y^{-1}\circ x)}dxdy\geq C2^{ip}a^{-sp/Q}_{i-1}d_{i}.
\end{equation}
From \eqref{15} and \eqref{dk=ak-sumdl} we get
\begin{equation}\label{16}
\sum_{j\in\mathbb{Z},\,\,j\leq i-2}\int_{D_{i}}\int_{D_{j}}\frac{|u(x)-u(y)|^{p}}{q^{Q+sp}(y^{-1}\circ x)}dxdy\geq C\left(2^{ip}a^{-sp/Q}_{i-1}a_{i}-\sum_{l\in\mathbb{Z},\,l\geq i+1}2^{ip}a^{-sp/Q}_{i-1}d_{l}\right).
\end{equation}
By \eqref{15} and \eqref{sums} we establish that
\begin{equation}\label{17}
\sum_{i\in\mathbb{Z},\,a_{i-1}\neq0}\sum_{j\in\mathbb{Z},\,\,j\leq i-2}\int_{D_{i}}\int_{D_{j}}\frac{|u(x)-u(y)|^{p}}{q^{Q+sp}(y^{-1}\circ x)}dxdy\geq C \sum_{i\in\mathbb{Z},\,a_{i-1}\neq0}2^{ip}a^{-sp/Q}_{i-1}d_{i}\geq C\,S.
\end{equation}
Then, by using \eqref{14}, \eqref{16} and \eqref{17}, we obtain that
\begin{equation*}
\sum_{i\in\mathbb{Z},\,a_{i-1}\neq0}\sum_{j\in\mathbb{Z},\,\,j\leq i-2}\int_{D_{i}}\int_{D_{j}}\frac{|u(x)-u(y)|^{p}}{q^{Q+sp}(y^{-1}\circ x)}dxdy\geq C\sum_{i\in\mathbb{Z},\,a_{i-1}\neq0}2^{ip}a^{-sp/Q}_{i-1}a_{i}
\end{equation*}
\begin{equation*}
-C\sum_{i\in\mathbb{Z},\,a_{i-1}\neq0}\sum_{l\in\mathbb{Z},\,l\geq i+1}2^{ip}a^{-sp/Q}_{i-1}d_{l}\geq C\sum_{i\in\mathbb{Z},\,a_{i-1}\neq0}2^{ip}a^{-sp/Q}_{i-1}a_{i}-C\,S
\end{equation*}
\begin{equation*}
\geq C\sum_{i\in\mathbb{Z},\,a_{i-1}\neq0}2^{ip}a^{-sp/Q}_{i-1}a_{i}-\sum_{i\in\mathbb{Z},\,a_{i-1}\neq0}\sum_{j\in\mathbb{Z},\,\,j\leq i-2}\int_{D_{i}}\int_{D_{j}}\frac{|u(x)-u(y)|^{p}}{q^{Q+sp}(y^{-1}\circ x)}dxdy.
\end{equation*}

This means
\begin{equation}\label{18}
\sum_{i\in\mathbb{Z},\,a_{i-1}\neq0}\sum_{j\in\mathbb{Z},\,\,j\leq i-2}\int_{D_{i}}\int_{D_{j}}\frac{|u(x)-u(y)|^{p}}{q^{Q+sp}(y^{-1}\circ x)}dxdy\geq \frac{C}{2} \sum_{i\in\mathbb{Z},\,a_{i-1}\neq0}2^{ip}a^{-sp/Q}_{i-1}a_{i},
\end{equation}
for a constant $C>0$.
By symmetry and using \eqref{18}, we arrive at
\begin{align*}
[u]^{p}_{s,p,q}
 & =\int_{\mathbb{G}}\int_{\mathbb{G}}\frac{|u(x)-u(y)|^{p}}{q^{Q+sp}(y^{-1}\circ x)}dxdy=\sum_{i,j\in \mathbb{Z}}\int_{D_{i}}\int_{D_{j}}\frac{|u(x)-u(y)|^{p}}{q^{Q+sp}(y^{-1}\circ x)}dxdy
\\ &
\geq 2\sum_{i,j\in \mathbb{Z},\,j<i}\int_{D_{i}}\int_{D_{j}}\frac{|u(x)-u(y)|^{p}}{q^{Q+sp}(y^{-1}\circ x)}dxdy
 \\ &
\geq2\sum_{i\in\mathbb{Z},\,a_{i-1}\neq0}\sum_{j\in\mathbb{Z},\,\,j\leq i-2}\int_{D{i}}\int_{D_{j}}\frac{|u(x)-u(y)|^{p}}{q^{Q+sp}(y^{-1}\circ x)}dxdy
 \\ &
\geq  C \sum_{i\in\mathbb{Z},\,a_{i-1}\neq0}2^{ip}a^{-sp/Q}_{i-1}a_{i}.
\end{align*}
Lemma \ref{3.3} is proved.
\end{proof}

\begin{lem}\label{3.4}
Let $1< p<\infty$ and $u:\mathbb{G}\rightarrow\mathbb{R}$ be a measurable function. For any $n\in\mathbb{R}$
\begin{equation}
u_{n}:=\max\{\min\{u(x),n\},-n\},\,\,\,for\,\, any\,\,x\in \mathbb{G}.
\end{equation}
Then,
$$\lim_{n\rightarrow +\infty}\|u_{n}\|_{L^{p}(\mathbb{G})}=\|u \|_{L^{p}(\mathbb{G})}.$$
\end{lem}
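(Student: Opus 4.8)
The statement to prove is Lemma \ref{3.4}: for a measurable function $u:\mathbb{G}\to\mathbb{R}$ and the truncations $u_n = \max\{\min\{u(x),n\},-n\}$, we have $\lim_{n\to\infty}\|u_n\|_{L^p(\mathbb{G})} = \|u\|_{L^p(\mathbb{G})}$.

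The plan is to invoke the Monotone Convergence Theorem. First I would observe that for each fixed $x\in\mathbb{G}$, the truncation operation gives $|u_n(x)| = \min\{|u(x)|,n\}$, so that $|u_n(x)|^p = (\min\{|u(x)|,n\})^p$ increases monotonically in $n$ and converges pointwise to $|u(x)|^p$ as $n\to\infty$ (with the value $+\infty$ allowed if $u\notin L^p$). Indeed $|u_n(x)| \leq |u_{n+1}(x)|$ for all $n$ since $\min\{t,n\}\leq\min\{t,n+1\}$ for $t\geq 0$, and $\min\{|u(x)|,n\}\to|u(x)|$ pointwise. Then by the Monotone Convergence Theorem applied to the nonnegative increasing sequence $|u_n|^p$,
\[
\lim_{n\to\infty}\int_{\mathbb{G}}|u_n(x)|^p\,dx = \int_{\mathbb{G}}|u(x)|^p\,dx,
\]
which is exactly $\lim_{n\to\infty}\|u_n\|_{L^p(\mathbb{G})}^p = \|u\|_{L^p(\mathbb{G})}^p$; taking $p$-th roots (using continuity of $t\mapsto t^{1/p}$ on $[0,\infty]$) gives the claim. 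This covers both the case $u\in L^p(\mathbb{G})$ and the case $\|u\|_{L^p(\mathbb{G})}=+\infty$.

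The only point requiring a small verification is the identity $|u_n(x)| = \min\{|u(x)|,n\}$ and its monotonicity in $n$. If $u(x)\geq 0$ then $u_n(x) = \min\{u(x),n\}\geq 0$, so $|u_n(x)| = \min\{|u(x)|,n\}$; if $u(x) < 0$ then $\min\{u(x),n\} = u(x) < 0$, so $u_n(x) = \max\{u(x),-n\} = -\min\{-u(x),n\} = -\min\{|u(x)|,n\}$, hence again $|u_n(x)| = \min\{|u(x)|,n\}$. Monotonicity and pointwise convergence to $|u(x)|$ are then immediate from elementary properties of $t\mapsto\min\{t,n\}$ on $[0,\infty)$.

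There is essentially no obstacle here: the lemma is a standard approximation fact and the argument is a direct application of monotone convergence. The only care needed is the bookkeeping with the double truncation (clipping above at $n$ and below at $-n$) to confirm it reduces to clipping the absolute value at $n$, after which everything is routine.
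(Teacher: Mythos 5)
Your proof is correct: the identity $|u_n(x)|=\min\{|u(x)|,n\}$ is verified properly, and the monotone convergence theorem then gives the claim in both the finite and infinite cases. The paper itself gives no argument here, simply deferring to \cite{DPV}, Lemma 6.4, whose proof is exactly this standard monotone-convergence argument, so your approach coincides with the intended one.
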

\begin{proof}
The proof is the same as in	\cite[Lemma 6.4]{DPV}.
	\end{proof}

By using the above lemmas we prove the following analogue of the fractional Sobolev inequality on $\mathbb{G}$:
\begin{thm}\label{sob}
Let $p>1$, $s\in(0,1)$, $Q>sp$ and $q$ be a quasi-norm on $\mathbb{G}$. For any measurable and compactly supported function $u:\mathbb{G}\rightarrow \mathbb{R}$ there exists a positive constant $C=C(Q,p,s,q)>0$ such that
  \begin{equation}\label{sobin1}
  ||u||^{p}_{L^{p^{*}}(\mathbb{G})}\leq C[u]^{p}_{s,p,q},
  \end{equation}
  where $p^{*}=p^{*}(Q,s)=\frac{Qp}{Q-sp}$.
\end{thm}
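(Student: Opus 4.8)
The plan is to follow the layer-cake scheme from \cite{DPV}, combining the three lemmas just proved. First I would reduce to the case of bounded functions: given a measurable, compactly supported $u$, set $u_n := \max\{\min\{u,n\},-n\}$ as in Lemma \ref{3.4}. Each $u_n$ is bounded and compactly supported, and one checks $[u_n]_{s,p,q}\le [u]_{s,p,q}$ because the truncation $t\mapsto \max\{\min\{t,n\},-n\}$ is $1$-Lipschitz, so $|u_n(x)-u_n(y)|\le |u(x)-u(y)|$ pointwise. Hence it suffices to prove \eqref{sobin1} for bounded compactly supported $u$ and then pass to the limit $n\to\infty$ using Lemma \ref{3.4} on the left-hand side and monotone convergence (or the uniform bound) on the right.

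For bounded compactly supported $u$, set $a_k := |\{|u|>2^k\}|$ for $k\in\mathbb{Z}$; this sequence is nonnegative, decreasing by \eqref{ak+1<ak}, and vanishes for large $k$ since $u$ is bounded and compactly supported, so the hypotheses of Lemma \ref{sob2} are met (with $T = 2^{p^*\cdot(Q-sp)/Q\cdot}$ chosen appropriately — concretely $T=2^{p}$ after matching exponents). The core estimate is the chain
\[
\|u\|_{L^{p^*}(\mathbb{G})}^{p^*} \;=\; \int_{\mathbb{G}} |u|^{p^*}\,dx \;\le\; C \sum_{k\in\mathbb{Z}} 2^{(k+1)p^*}\, a_k \;\le\; C \sum_{k\in\mathbb{Z}} a_k^{(Q-sp)/Q}\, 2^{kp} \;\le\; C \sum_{k\in\mathbb{Z},\,a_k\neq 0} a_{k+1}\, a_k^{-sp/Q}\, 2^{kp} \;\le\; C\,[u]_{s,p,q}^p,
\]
where the first inequality is the standard distribution-function bound $\int |u|^{p^*} \le \sum_k 2^{(k+1)p^*}(a_k - a_{k+1}) \le \sum_k 2^{(k+1)p^*} a_k$, the middle step uses $p^* \cdot \frac{Q-sp}{Q} = p$ together with Lemma \ref{sob2} applied with $T = 2^{p}$, and the last inequality is exactly Lemma \ref{3.3}. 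Raising both sides to the power $p/p^*$ and noting $p^*\cdot\frac{p}{p^*}=p$ gives $\|u\|_{L^{p^*}}^p \le C[u]_{s,p,q}^p$, which is \eqref{sobin1}.

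The main obstacle I anticipate is bookkeeping with the exponents and constants so that Lemma \ref{sob2} applies verbatim: one must verify that $\frac{Q-sp}{Q}p^* = p$ (true since $p^* = \frac{Qp}{Q-sp}$), identify $T=2^p>1$, and confirm the sequence $a_k$ genuinely satisfies "$a_k=0$ for $k\ge m$" — this is where compact support plus $L^\infty$ is used, and it is why the truncation-and-limit argument in the first step is not optional. A secondary technical point is justifying the passage to the limit: the bound $[u_n]_{s,p,q}\le[u]_{s,p,q}$ makes the right-hand side uniformly controlled, while Fatou's lemma gives $\|u\|_{L^{p^*}}^p = \lim \|u_n\|_{L^{p^*}}^p \le \liminf C[u_n]_{s,p,q}^p \le C[u]_{s,p,q}^p$, so in fact only Fatou (not the full strength of Lemma \ref{3.4}) is needed, though invoking Lemma \ref{3.4} is cleaner. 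Everything else is a routine transcription of the Euclidean argument with $|x-y|$ replaced by $q(y^{-1}\circ x)$ and Lebesgue measure replaced by Haar measure, the key geometric input being the polar decomposition \eqref{EQ:polar} already used inside Lemma \ref{lem1}.
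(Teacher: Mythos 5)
Your overall strategy is exactly the paper's (truncate to reduce to bounded functions, apply Lemma \ref{sob2} with $T=2^{p}$, then conclude with Lemma \ref{3.3}), and your reduction step via the $1$-Lipschitz truncation and Fatou is fine --- if anything cleaner than the paper's. However, the displayed chain contains a step that fails as written: the inequality $\sum_{k}2^{(k+1)p^{*}}a_{k}\le C\sum_{k}a_{k}^{(Q-sp)/Q}2^{kp}$ cannot hold with any fixed constant, because the two sides have different homogeneity in the $a_{k}$ (compose $u$ with a dilation so that every $a_{k}$ is replaced by $\delta^{Q}a_{k}$: the left side scales like $\delta^{Q}$, the right like $\delta^{Q-sp}$, so the inequality breaks for large $\delta$). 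The exponent $(Q-sp)/Q=p/p^{*}$ can only be brought onto each individual $a_{k}$ \emph{after} the whole sum has been raised to the power $p/p^{*}<1$, via the subadditivity $\bigl(\sum_{k}b_{k}\bigr)^{p/p^{*}}\le\sum_{k}b_{k}^{p/p^{*}}$ for nonnegative $b_{k}$; you instead perform the term-by-term exponent change first and the power $p/p^{*}$ last. Relatedly, applying ``raise both sides to the power $p/p^{*}$'' to the end of your chain would yield $\|u\|_{L^{p^{*}}(\mathbb{G})}^{p}\le C[u]_{s,p,q}^{p^{2}/p^{*}}$ rather than $C[u]_{s,p,q}^{p}$, so the exponents do not close up.

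The repair is only a reordering, and it is exactly what the paper does: write
\begin{equation*}
\|u\|_{L^{p^{*}}(\mathbb{G})}^{p}=\Bigl(\sum_{k\in\mathbb{Z}}\int_{A_{k}\setminus A_{k+1}}|u|^{p^{*}}dx\Bigr)^{p/p^{*}}\le\Bigl(\sum_{k\in\mathbb{Z}}2^{(k+1)p^{*}}a_{k}\Bigr)^{p/p^{*}}\le 2^{p}\sum_{k\in\mathbb{Z}}2^{kp}a_{k}^{(Q-sp)/Q},
\end{equation*}
where the last inequality uses $p/p^{*}=(Q-sp)/Q\in(0,1)$ and the subadditivity of $t\mapsto t^{p/p^{*}}$; then Lemma \ref{sob2} with $T=2^{p}$ and Lemma \ref{3.3} give the bound $C[u]_{s,p,q}^{p}$ directly, with no further raising of powers. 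With that correction your argument coincides with the paper's proof.
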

\begin{proof}
First of all, we suppose that Gagliardo's seminorm $[u]_{s,p,q}$ is bounded, i.e.
\begin{equation}\label{22}
[u]^{p}_{s,p,q}=\int_{\mathbb{G}}\int_{\mathbb{G}}\frac{|u(x)-u(y)|^{p}}{q^{Q+sp}(y^{-1}\circ x)}dxdy<+\infty.
\end{equation}
and we suppose that $u\in L^{\infty}(\mathbb{G})$.

If \eqref{22} is satisfied for bounded functions, it holds also for the function $u_{n}$, obtained by $u$ cutting at levels $-n$ and $n$. Thus, by using Lemma \ref{3.4} and \eqref{22} with the dominated convergence theorem, we obtain that
\begin{equation*}
\lim_{n\rightarrow+\infty}[u_{n}]^{p}_{s,p,q}=\lim_{n\rightarrow+\infty}
\int_{\mathbb{G}}\int_{\mathbb{G}}\frac{|u_{n}(x)-u_{n}(y)|^{p}}{q^{Q+sp}(y^{-1}\circ x)}dxdy
\end{equation*}
\begin{equation}\label{23}
=\int_{\mathbb{G}}\int_{\mathbb{G}}\frac{|u(x)-u(y)|^{p}}{q^{Q+sp}(y^{-1}\circ x)}dxdy=[u]^{p}_{s,p,q}.
\end{equation}
As in Lemma \ref{3.3} we define $a_{k}$ and $A_{k}$, so we have
\begin{equation*}
||u \|_{L^{p^{*}}(\mathbb{G})}=\left(\sum_{k\in\mathbb{Z}}\int_{A_{k}\setminus A_{k+1}}|u(x)|^{p^{*}}dx\right)^{1/p^{*}}\leq\left(\sum_{k\in\mathbb{Z}}\int_{A_{k}\setminus A_{k+1}}2^{(k+1)p^{*}}dx\right)^{1/p^{*}}
\end{equation*}
\begin{equation}
\leq\left(\sum_{k\in\mathbb{Z}}2^{(k+1)p^{*}}a_{k}\right)^{1/p^{*}}.
\end{equation}
Then, with $p/p^{*}=1-sp/Q<1$ and $T=2^{p}$, Lemma \ref{sob2} yields
\begin{multline}
\|u\|^{p}_{L^{p^{*}}(\mathbb{G})}\leq2^{p}
\left(\sum_{k\in\mathbb{Z}}2^{kp^{*}}a_{k}
\right)^{p/p^{*}}\leq2^{p}\sum_{k\in\mathbb{Z}}2^{kp}a^{(Q-sp)/Q}_{k}\\ \leq C\sum_{k\in\mathbb{Z},\,a_{k}\neq0}2^{kp}a^{-sp/Q}_{k}a_{k+1}
\end{multline}
for a positive constant $C=C(Q,p,s,q)>0$.

Finally, using Lemma \ref{3.3} we arrive at
\begin{equation}
 \|u\|^{p}_{L^{p^{*}}(\mathbb{G})}\leq C\sum_{k\in\mathbb{Z},\,a_{k}\neq0}2^{kp}a^{-sp/Q}_{k}a_{k+1} \leq C\int_{\mathbb{G}}\int_{\mathbb{G}}\frac{|u(x)-u(y)|^{p}}{q^{Q+sp}(y^{-1}\circ x)}dxdy= C[u]^{p}_{s,p,q}.
\end{equation}

Theorem \ref{sob} is proved.
\end{proof}

Now to prove an analogue of the fractional Hardy inequality we need some preliminary results.
\begin{lem}[{\cite{FS}}, Lemma 2.6]\label{nineq}
Assume that $p>1$, then for all $t\in[0,1]$ and $a\in\mathbb{C}$, we have
\begin{equation}
|a-t|^{p}\geq(1-t)^{p-1}(|a|^{p}-t).
\end{equation}
\end{lem}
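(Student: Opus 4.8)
The statement is elementary, and the plan is to reduce it to a one-dimensional inequality and then read it off from the convexity of $\tau\mapsto\tau^{p}$ on $[0,\infty)$.

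First I would dispose of the complex variable. For $a\in\mathbb{C}$ and $t\in[0,1]$ the reverse triangle inequality gives $|a-t|\ge\bigl|\,|a|-t\,\bigr|$, and since $\tau\mapsto\tau^{p}$ is nondecreasing on $[0,\infty)$ we get $|a-t|^{p}\ge\bigl|\,|a|-t\,\bigr|^{p}$. The right-hand side of the claimed inequality depends on $a$ only through $|a|$, so it suffices to prove
\[
|r-t|^{p}\ge(1-t)^{p-1}(r^{p}-t)\qquad\text{for all }r\ge0,\ t\in[0,1].
\]

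Next I would split into cases according to the sign of the right-hand side. If $t=1$ the right-hand side vanishes and the inequality is trivial. If $r<t$, then $r\le1$ together with $p>1$ forces $r^{p}\le r<t$, so the right-hand side is $\le0\le|r-t|^{p}$. This leaves the main case $t\in[0,1)$ and $r\ge t$, in which $|r-t|^{p}=(r-t)^{p}$. Writing $r$ as the two-point convex combination $r=t\cdot1+(1-t)u$ with $u:=\frac{r-t}{1-t}\ge0$ and using convexity of $\tau\mapsto\tau^{p}$ on $[0,\infty)$,
\[
r^{p}=\bigl(t\cdot1+(1-t)u\bigr)^{p}\le t+(1-t)u^{p}=t+\frac{(r-t)^{p}}{(1-t)^{p-1}},
\]
and rearranging gives precisely $(r-t)^{p}\ge(1-t)^{p-1}(r^{p}-t)$.

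I do not expect a genuine obstacle: the content of the lemma is exactly Jensen's inequality for two points applied to the convex function $\tau^{p}$, and the only care required is to isolate the degenerate cases ($t=1$ and $r^{p}\le t$) before invoking convexity, so that one never divides by $(1-t)^{p-1}$ when it vanishes.
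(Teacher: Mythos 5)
Your argument is correct. Note, however, that the paper itself gives no proof of this lemma: it is imported verbatim from Frank--Seiringer \cite{FS}, Lemma 2.6, with only the citation. So there is nothing in the paper to compare against line by line; what you have done is supply a self-contained elementary proof where the authors chose to quote. Checking your steps: the reduction to $r=|a|\ge 0$ via $|a-t|\ge\bigl||a|-t\bigr|$ and monotonicity of $\tau\mapsto\tau^{p}$ is valid because the right-hand side depends on $a$ only through $|a|$; the degenerate cases are handled correctly ($t=1$ kills the right-hand side, and for $r<t\le 1$ one has $r^{p}\le r<t$, so the right-hand side is nonpositive); and in the main case the identity $t\cdot 1+(1-t)u=r$ with $u=\frac{r-t}{1-t}\ge 0$ together with two-point Jensen for $\tau\mapsto\tau^{p}$ gives exactly
\[
r^{p}\le t+(1-t)u^{p}=t+\frac{(r-t)^{p}}{(1-t)^{p-1}},
\]
which rearranges to the claim. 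This is essentially the standard proof of the Frank--Seiringer pointwise inequality, and your care in isolating the cases where $(1-t)^{p-1}$ vanishes or the right-hand side is nonpositive is exactly the care required; the only thing your write-up buys beyond the paper is that the reader no longer has to consult \cite{FS}.
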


\begin{lem}(Picone-type inequality)\label{Picone}
Let $\omega\in W_{0}^{s,p,q}(\Omega)$ be $w>0$ in $\Omega\subset \mathbb{G}$. Assume that $(-\Delta_{p,q})^{s}\omega=\nu>0$ with $\nu\in L^{1}_{loc}(\Omega)$ , then for all $u\in C_{0}^{\infty}(\Omega)$, we have
\begin{equation}
\frac{1}{2}\int_{\Omega}\int_{\Omega}\frac{|u(x)-u(y)|^{p}}{q^{Q+ps}(y^{-1}\circ x)}dxdy\geq\left<(-\Delta_{p,q})^{s}\omega,\frac{|u|^{p}}{\omega^{p-1}}\right>.
\end{equation}

\end{lem}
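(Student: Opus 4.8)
The plan is to adapt the classical nonlocal Picone inequality argument (as in Brasco–Franzina or Amghibech) to the homogeneous group setting, where the only structural inputs needed are the left-invariance of Haar measure, the symmetry $q(y^{-1}\circ x)=q(x^{-1}\circ y)$, and the pointwise inequality from Lemma \ref{nineq}. First I would write out the pairing explicitly: by definition of $(-\Delta_{p,q})^{s}\omega$ and the weak formulation, for any test function $\varphi\in C_0^\infty(\Omega)$,
\begin{equation*}
\left\langle (-\Delta_{p,q})^{s}\omega,\varphi\right\rangle=\int_{\Omega}\int_{\Omega}\frac{|\omega(x)-\omega(y)|^{p-2}(\omega(x)-\omega(y))\,(\varphi(x)-\varphi(y))}{q^{Q+ps}(y^{-1}\circ x)}\,dx\,dy.
\end{equation*}
I would then take $\varphi=|u|^{p}/\omega^{p-1}$ (legitimate since $\omega>0$ in $\Omega$ and one may first argue for $u$ with compact support in $\Omega$, where $\omega$ is bounded below), so that the claim reduces to the pointwise estimate
\begin{equation*}
|\omega(x)-\omega(y)|^{p-2}(\omega(x)-\omega(y))\left(\frac{|u(x)|^{p}}{\omega(x)^{p-1}}-\frac{|u(y)|^{p}}{\omega(y)^{p-1}}\right)\leq \frac12\,|u(x)-u(y)|^{p},
\end{equation*}
wait — more precisely the integrand inequality, since after integrating both sides over $\Omega\times\Omega$ against the symmetric kernel $q^{-(Q+ps)}(y^{-1}\circ x)\,dx\,dy$ we recover exactly the two sides of the lemma.

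The key step is the pointwise inequality. Fix $x,y$ with, say, $\omega(x)\ge\omega(y)>0$, and set $t=\omega(y)/\omega(x)\in(0,1]$, $a=u(y)/u(x)$ (handling $u(x)=0$ separately). Homogenizing, the desired bound becomes
\begin{equation*}
(1-t)^{p-1}\bigl(|u(x)|^{p}-|u(y)|^{p}t^{1-p}\cdot t^{p-1}\bigr)\leq |u(x)-u(y)|^{p},
\end{equation*}
hmm, let me organize it cleanly: after dividing through by $\omega(x)^{p-1}$ one must show $(1-t)^{p-1}\bigl(|u(x)|^{p}-|u(y)|^{p}/t^{p-1}\cdot t^{p-1}\bigr)\le|u(x)-u(y)|^p$; the point is that Lemma \ref{nineq} applied with that $a$ and $t$ gives $|a-t|^{p}\ge (1-t)^{p-1}(|a|^{p}-t)$, and multiplying by $|u(x)|^p$ and rearranging yields precisely the discrete Picone bound $|u(x)-u(y)\,\text{(scaled)}|^{p}\ge (1-t)^{p-1}\bigl(\tfrac{|u(x)|^p}{\omega(x)^{p-1}}-\tfrac{|u(y)|^p}{\omega(y)^{p-1}}\bigr)\omega(x)^{p-1}\cdot(\text{correction})$. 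Because the roles of $x$ and $y$ are exchanged in the other half of the double integral, summing the two symmetric contributions produces the factor that turns $\int\int$ into $\tfrac12\int\int$ on the left — this bookkeeping is where I expect to have to be careful, and it is the main obstacle: matching the combinatorial factor $\tfrac12$ and checking that the terms where $u$ or $\omega$ vanish, or where $u$ changes sign, do not spoil the inequality.

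Finally I would address the technical points: (i) the restriction to $u$ with $\mathrm{supp}\,u\Subset\Omega$ first, so that $|u|^p/\omega^{p-1}\in W_0^{s,p,q}(\Omega)$ and is an admissible test function against $(-\Delta_{p,q})^s\omega=\nu\in L^1_{loc}$, then a density/truncation argument to reach all of $C_0^\infty(\Omega)$ using the Sobolev-type control from Theorem \ref{sob}; (ii) integrability of the right-hand side, which is finite since $u\in C_0^\infty(\Omega)$ implies $[u]_{s,p,q}<\infty$; and (iii) the sign-handling in Lemma \ref{nineq}, which is stated for $a\in\mathbb{C}$ and so covers $a=u(y)/u(x)$ of either sign directly. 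With the pointwise inequality in hand and the kernel symmetry $q^{Q+ps}(y^{-1}\circ x)=q^{Q+ps}(x^{-1}\circ y)$ (from property (a) of the quasi-norm together with left-invariance), integrating gives the stated inequality, completing the proof.
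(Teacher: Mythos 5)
Your overall strategy coincides with the paper's: test the equation against $|u|^{p}/\omega^{p-1}$, symmetrize the resulting double integral using the kernel symmetry $q(y^{-1}\circ x)=q(x^{-1}\circ y)$, and reduce the lemma to a pointwise discrete Picone inequality proved via Lemma \ref{nineq}. One small clarification on the bookkeeping you were worried about: the factor $\tfrac12$ appears exactly once, when you swap $x\leftrightarrow y$ in the pairing and use the antisymmetry of $|\omega(x)-\omega(y)|^{p-2}(\omega(x)-\omega(y))$ to replace $v(x)$ by $\tfrac12(v(x)-v(y))$; after that the same $\tfrac12$ sits on both sides and there is no further combinatorial matching to do.

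The genuine gap is in the key pointwise step, which you yourself flag as ``the main obstacle'' and leave unresolved. Your substitution $a=u(y)/u(x)$ does not work: with $t=\omega(y)/\omega(x)$ it gives $|a-t|^{p}=\left|u(y)/u(x)-\omega(y)/\omega(x)\right|^{p}$, which is not a rescaling of $|u(x)-u(y)|^{p}$, and $(1-t)^{p-1}(|a|^{p}-t)$ does not rearrange into the difference $\frac{|u(x)|^{p}}{\omega(x)^{p-1}}-\frac{|u(y)|^{p}}{\omega(y)^{p-1}}$. The correct choice (the one the paper makes, with $g=u/\omega$) is $a=\dfrac{g(x)}{g(y)}=\dfrac{u(x)\,\omega(y)}{u(y)\,\omega(x)}$ together with $t=\omega(y)/\omega(x)\in(0,1]$ in the case $\omega(x)\ge\omega(y)$. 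Then $a-t=\frac{\omega(y)}{\omega(x)}\cdot\frac{u(x)-u(y)}{u(y)}$, and multiplying the inequality of Lemma \ref{nineq} through by $|u(y)|^{p}\omega(x)^{p}/\omega(y)^{p}$ yields exactly
\begin{equation*}
|u(x)-u(y)|^{p}\ \ge\ (\omega(x)-\omega(y))^{p-1}\left(\frac{|u(x)|^{p}}{\omega(x)^{p-1}}-\frac{|u(y)|^{p}}{\omega(y)^{p-1}}\right),
\end{equation*}
which is the nonnegativity of the paper's remainder $R(x,y)$; the degenerate cases $u(y)=0$ and $\omega(x)=\omega(y)$ are immediate. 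With this substitution corrected, the remaining technical points you list (admissibility of the test function, finiteness of the seminorm, signs) go through as you describe and your argument reproduces the paper's proof.
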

\begin{proof}
We set $v=\frac{|u|^{p}}{|\omega|^{p-1}}$ and $k(x,y)=\frac{1}{q^{Q+ps}(y^{-1}\circ x)}$, then we have
\begin{equation*}
\left<(-\Delta_{p,q})^{s}\omega,v\right>=\int_{\Omega}v(x)dx\int_{\Omega}|\omega(x)-\omega(y)|^{p-2}(\omega(x)-\omega(y))k(x,y)dy
\end{equation*}
\begin{equation*}
=\int_{\Omega}\frac{|u|^{p}}{|\omega|^{p-1}}dx\int_{\Omega}|\omega(x)-\omega(y)|^{p-2}(\omega(x)-\omega(y))k(x,y)dy.
\end{equation*}

Let us prove that $k(x,y)$ is symmetric, i.e. $k(x,y)=k(y,x)$ for all $x,y\in \mathbb{G}$.  By the definition of quasi-norm we have $q(x^{-1})=q(x)$ for all $x\in\mathbb{G}$. So, by using this fact we obtain
\begin{equation*}
k(x,y)=\frac{1}{q^{Q+ps}(y^{-1}\circ x)}=\frac{1}{q^{Q+ps}(z)}=\frac{1}{q^{Q+ps}(z^{-1})}
\end{equation*}
\begin{equation*}
=\frac{1}{q^{Q+ps}((y^{-1}\circ x)^{-1})}=\frac{1}{q^{Q+ps}(x^{-1}\circ y)}=k(y,x),
\end{equation*}
for all $x,y\in \mathbb{G}$.
Now since $k(x,y)$ is symmetric, we establish that
\begin{equation*}
\left<(-\Delta_{p,q})^{s}\omega,v\right>=
\end{equation*}
\begin{equation*}
\frac{1}{2}\int_{\Omega}\int_{\Omega}\left(\frac{|u(x)|^{p}}{|\omega(x)|^{p-1}}-\frac{|u(y)|^{p}}{|\omega(y)|^{p-1}}\right)|\omega(x)-\omega(y)|^{p-2}(\omega(x)-\omega(y))k(x,y)dydx.
\end{equation*}

Let $g=\frac{u}{\omega}$ and $$R(x,y)=|u(x)-u(y)|^{p}-(|g(x)|^{p}\omega(x)-|g(y)|^{p}\omega(y))|\omega(x)-\omega(y)|^{p-2}(\omega(x)-\omega(y)),$$ then we have

\begin{equation*}
\left<(-\Delta_{p,q})^{s}\omega,v\right>+\frac{1}{2}\int_{\Omega}\int_{\Omega}R(x,y)k(x,y)dydx=\frac{1}{2}\int_{\Omega}\int_{\Omega}|u(x)-u(y)|^{p}k(x,y)dydx.
\end{equation*}
 By the symmetry argument, we can assume that $\omega(x)\geq\omega(y)$. By using Lemma \ref{nineq} with $t=\frac{\omega(y)}{\omega(x)}$ and $a=\frac{g(x)}{g(y)}$ and we establish that $R(x,y)\geq0$. Thus, we have proved the inequality

\begin{equation*}
\left<(-\Delta_{p,q})^{s}\omega,v\right>\leq\frac{1}{2}\int_{\Omega}\int_{\Omega}\frac{|u(x)-u(y)|^{p}}{q^{Q+sp}(y^{-1}\circ x)}dydx.
\end{equation*}
Lemma \ref{Picone} is proved.
\end{proof}

\begin{lem}\label{sh}
Let $\omega=q^{-\gamma}(x)$ with $\gamma\in\left( 0,\frac{Q-ps}{p-1}\right)$, then there exists a positive constant $\mu(\gamma)>0$ such that
\begin{equation}
(-\Delta_{p,q})^{s}(q^{-\gamma}(x))=\mu(\gamma)\frac{1}{q^{ps+\gamma(p-1)}(x)}\,\,
\text{a.e.}\; \text{in}\;\mathbb{G}\setminus\{0\}.
\end{equation}
\end{lem}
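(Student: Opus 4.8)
The plan is to use the dilation structure of $\mathbb{G}$ together with the homogeneity of the quasi-norm $q$. Set $\omega(x)=q^{-\gamma}(x)$. Since a homogeneous Lie group is nilpotent, hence unimodular, its Haar measure is left and right invariant and invariant under $y\mapsto y^{-1}$, and $d(D_{\lambda}y)=\lambda^{Q}dy$. First I would substitute $y=x\circ z$ in the defining principal-value integral: then $y^{-1}\circ x=z^{-1}$, so property (a) of the quasi-norm gives $q(y^{-1}\circ x)=q(z)$, the domain $\mathbb{G}\setminus B_{q}(x,\delta)$ becomes $\mathbb{G}\setminus B_{q}(0,\delta)$, and
\begin{equation*}
(-\Delta_{p,q})^{s}\omega(x)=2\lim_{\delta\searrow0}\int_{\mathbb{G}\setminus B_{q}(0,\delta)}\frac{|q^{-\gamma}(x)-q^{-\gamma}(x\circ z)|^{p-2}\bigl(q^{-\gamma}(x)-q^{-\gamma}(x\circ z)\bigr)}{q^{Q+sp}(z)}\,dz .
\end{equation*}

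Next I would read off the homogeneity of this expression. Replacing $x$ by $D_{\lambda}x$ and substituting $z=D_{\lambda}w$, one uses $q(D_{\lambda}x)=\lambda q(x)$, the automorphism identity $(D_{\lambda}x)\circ(D_{\lambda}w)=D_{\lambda}(x\circ w)$ (hence $q((D_{\lambda}x)\circ(D_{\lambda}w))=\lambda q(x\circ w)$), $q(D_{\lambda}w)=\lambda q(w)$, $d(D_{\lambda}w)=\lambda^{Q}dw$, and the fact that $B_{q}(0,\delta)$ is transformed into $B_{q}(0,\delta/\lambda)$. Collecting the powers of $\lambda$ — namely $\lambda^{-\gamma(p-1)}$ from the numerator, $\lambda^{-(Q+sp)}$ from $q^{Q+sp}(D_{\lambda}w)$ and $\lambda^{Q}$ from the Jacobian — gives
\begin{equation*}
(-\Delta_{p,q})^{s}\omega(D_{\lambda}x)=\lambda^{-(sp+\gamma(p-1))}(-\Delta_{p,q})^{s}\omega(x),
\end{equation*}
so $(-\Delta_{p,q})^{s}q^{-\gamma}$ is homogeneous of degree $-(sp+\gamma(p-1))$. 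Writing $x=D_{q(x)}\bigl(D_{1/q(x)}x\bigr)$ with $D_{1/q(x)}x\in\omega_{Q}$ and using the polar decomposition \eqref{EQ:polar}, this homogeneity yields
\begin{equation*}
(-\Delta_{p,q})^{s}q^{-\gamma}(x)=\mu(\gamma)\,q^{-(sp+\gamma(p-1))}(x)\qquad\text{a.e. in }\mathbb{G}\setminus\{0\},
\end{equation*}
where $\mu(\gamma)$ is the dilation-invariant value of the integral on the unit quasi-sphere.

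It then remains to check that $0<\mu(\gamma)<\infty$, and here both endpoints of the admissible range of $\gamma$ are used. For finiteness I would split the $z$-integral at $q(z)=1$: on $\{q(z)>1\}$ the quasi-triangle inequality forces $q(x\circ z)\to\infty$, so the numerator stays bounded and the contribution is dominated by $\int_{\{q(z)>1\}}q^{-(Q+sp)}(z)\,dz$, which is finite by \eqref{EQ:polar}; on $\{q(z)\le1\}$ there is in general no absolute convergence and one must exploit the cancellation inherent in the principal value (symmetrizing $z\leftrightarrow z^{-1}$ and using the regularity of $q^{-\gamma}$ away from the origin), the exponent count closing exactly when $sp+\gamma(p-1)<Q$, i.e. $\gamma<\frac{Q-ps}{p-1}$ — which is also precisely the condition guaranteeing $q^{-(sp+\gamma(p-1))}\in L^{1}_{\mathrm{loc}}(\mathbb{G})$, so that the asserted identity is meaningful. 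For positivity one uses $\gamma>0$: then $q^{-\gamma}$ blows up at the origin and decays at infinity, so it behaves like a fundamental-solution-type profile, and a sign analysis of the symmetrized integrand (the terms with $q(x\circ z)\ge q(x)$ prevailing) gives $\mu(\gamma)>0$.

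The main obstacle is the near-diagonal analysis on $\{q(z)\le1\}$: making the principal-value cancellation rigorous and quantitative on a general homogeneous Lie group, and thereby justifying that the scaling identities above are genuine rather than merely formal, together with the accompanying strict positivity of $\mu(\gamma)$. By contrast the homogeneity computation and the estimate on $\{q(z)>1\}$ are routine once unimodularity and the dilation identities have been recorded, and it is exactly in the two delicate steps that the constraints $\gamma>0$ and $\gamma<\frac{Q-ps}{p-1}$ enter.
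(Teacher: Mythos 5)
Your reduction is set up correctly: translating by $x$, counting powers of $\lambda$ under dilations, and concluding that $(-\Delta_{p,q})^{s}q^{-\gamma}$ is homogeneous of degree $-(sp+\gamma(p-1))$ is a sound (and slightly softer) version of what the paper does by a direct polar-coordinate computation, which reduces everything to the one-dimensional quantity
\begin{equation*}
\mu(\gamma)=\int_{0}^{+\infty}|1-\rho^{-\gamma}|^{p-2}(1-\rho^{-\gamma})\,L(\rho)\,\rho^{Q-1}\,d\rho,
\qquad
L(\rho)=\int_{q(y')=1}\frac{d\sigma(y')}{q^{Q+ps}\bigl((\rho y')^{-1}\circ x'\bigr)}.
\end{equation*}
But your proposal stops exactly where the lemma's content begins: you yourself flag the near-diagonal convergence and the strict positivity of $\mu(\gamma)$ as ``the main obstacle'' and offer only the hope that ``the cancellation inherent in the principal value'' and ``a sign analysis'' will close them. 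That is the genuine gap, because the paper closes both points with one concrete device that is absent from your argument: the radial reflection $\rho\mapsto 1/\rho$ together with the kernel identity $L(1/\zeta)=\zeta^{Q+ps}L(\zeta)$ (a consequence of $q(z)=q(z^{-1})$ and homogeneity). Folding the range $(0,1)$ onto $(1,\infty)$ turns $\mu(\gamma)$ into a single integral over $(1,\infty)$ whose integrand is, up to the positive factor $(\rho^{\gamma}-1)^{p-1}L(\rho)$, the difference $\rho^{Q-1-\gamma(p-1)}-\rho^{ps-1}$. This one formula does both jobs at once: the difference vanishes to first order at $\rho=1$, which \emph{is} the principal-value cancellation in quantitative form (the integrand behaves like $(\rho-1)^{p-1-ps}\in L^{1}(1,2)$, with no separate absolute convergence needed), and the difference is nonnegative for $\rho>1$ exactly when $Q-\gamma(p-1)>ps$, i.e.\ $\gamma<\frac{Q-ps}{p-1}$, which gives $\mu(\gamma)>0$. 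Your proposed symmetrization $z\leftrightarrow z^{-1}$ of the increment is not the pairing that produces this; the relevant involution is the inversion of $q(y)/q(x)$ about $1$.

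Two smaller points. First, your finiteness check on $\{q(z)>1\}$ overlooks that the numerator is \emph{not} bounded there: $q^{-\gamma}(x\circ z)$ blows up as $x\circ z\to 0$, and integrability at $y=0$ requires $\gamma(p-1)<Q$ (guaranteed by your hypothesis, but it must be used; in the paper this is the $\rho\to 0$, equivalently post-reflection $\rho\to\infty$, asymptotic). Second, your ``dilation-invariant value on the unit quasi-sphere'' is a priori a \emph{function} of the base point $x'\in\omega_{Q}$, not a constant --- homogeneity alone does not make it constant; the paper's $L(\rho)$ carries the same hidden dependence on $x'$, so this is a shared weakness rather than a deviation, but you should not present it as following from scaling alone.
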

\begin{proof}
We set $r=q(x)$ and $\rho=q(y)$ with $x=r x'$ and $y=\rho y'$ where $q(x')=q(y')=1$. Then, we have
\begin{equation*}
(-\Delta_{p,q})^{s}\omega
\end{equation*}
\begin{equation*}
=\int^{+\infty}_{0}|q^{-\gamma}(x)-q^{-\gamma}(y)|^{p-2}(q^{-\gamma}(x)-q^{-\gamma}(y))q^{Q-1}(y)\left(\int_{q(y')=1}\frac{d\sigma(y)}{q^{Q+ps}(y^{-1}\circ x)}\right)dq(y)
\end{equation*}
\begin{equation*}
=\frac{1}{q^{ps+\gamma(p-1)}(x)}\int^{+\infty}_{0}\left|1-\frac{q^{-\gamma}(y)}{q^{-\gamma}(x)}\right|^{p-2}\times
\end{equation*}
\begin{equation*}
\times\left(1-\frac{q^{-\gamma}(y)}{q^{-\gamma}(x)}\right)\frac{q^{Q-1}(y)}{q^{Q-1}(x)}\left(\int_{q(y')=1}\frac{d\sigma(y)}{q^{Q+ps}\left(\left(\frac{q(y)}{q(x)}y'\right)^{-1}\circ x'\right)}\right)dq(y).
\end{equation*}
Let $\rho=\frac{q(y)}{q(x)}$ and $L(\rho)=\int_{q(y')=1}\frac{d\sigma(y)}{q^{Q+ps}((\rho y')^{-1}\circ x')}$, we have
\begin{equation*}
(-\Delta_{p,q})^{s}\omega=\frac{1}{q^{ps+\gamma(p-1)}(x)}\int^{+\infty}_{0}|1-\rho^{-\gamma}|^{p-2}(1-\rho^{-\gamma})L(\rho)\rho^{Q-1}d\rho.
\end{equation*}
It easy to see
\begin{equation}\label{mu}
\mu(\gamma)=\int^{+\infty}_{0}\phi(\rho)d\rho
\end{equation}
with
$\phi(\rho)=|1-\rho^{-\gamma}|^{p-2}(1-\rho^{-\gamma})L(\rho)\rho^{Q-1}$.

Now it remains to show that $\mu(\gamma)$ is a positive and bounded. Firstly, let us show that $\mu(\gamma)$ is bounded. We have
\begin{equation}
\mu(\gamma)=\int^{1}_{0}\phi(\rho)d\rho+\int^{+\infty}_{1}\phi(\rho)d\rho=I_{1}+I_{2}.
\end{equation}
Using the new variable $\zeta=\frac{1}{\rho}$ we have $L(\rho)=L\left(\frac{1}{\zeta}\right)=\zeta^{Q+ps}L(\zeta)$ for any $\zeta>0$. Thus, we establish
\begin{equation}\label{asymp}
\mu(\gamma)=\int^{+\infty}_{1}(\rho^{-\gamma}-1)^{p-1}(\rho^{Q-1-\gamma(p-1)}-\rho^{ps-1})L(\rho)d\rho.
\end{equation}
For $\rho\rightarrow 1$ we have
 \begin{equation}
(\rho^{-\gamma}-1)^{p-1}(\rho^{Q-1-\gamma(p-1)}-\rho^{ps-1})L(\rho)\simeq(\rho-1)^{-1-ps+p}\in L^{1}(1,2).
\end{equation}
Similarly, for $\rho\rightarrow \infty$ we get
 \begin{equation}
(\rho^{-\gamma}-1)^{p-1}(\rho^{Q-1-\gamma(p-1)}-\rho^{ps-1})L(\rho)\simeq\rho^{-1-ps}\in L^{1}(2,\infty).
\end{equation}
These show that $\mu(\gamma)$ is bounded. On the other hand, by \eqref{asymp} with $\gamma\in\left( 0,\frac{Q-ps}{p-1}\right)$ we see that $\mu(\gamma)$ is positive.

Lemma \ref{sh} is proved.
\end{proof}

As a result we establish the following analogue of the fractional Hardy inequality on $\mathbb{G}$.

\begin{thm}\label{harun1}
For all $u\in C^{\infty}_{0}(\mathbb{G})$ we have
\begin{equation}\label{harun}
2\mu(\gamma)\int_{\mathbb{G}}\frac{|u(x)|^{p}}{q^{ps}(x)}dx\leq[u]^{p}_{s,p,q},
\end{equation}
where $p\in(1,\infty),\,s\in(0,1)$ and $C$ is positive constant.
\end{thm}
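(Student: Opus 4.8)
The plan is to derive \eqref{harun} as a direct consequence of the Picone-type inequality (Lemma \ref{Picone}) together with the explicit computation of the fractional $p$-sub-Laplacian of a power of the quasi-norm (Lemma \ref{sh}). The idea is to choose the test function $\omega$ in Lemma \ref{Picone} to be a suitable power of $q$, so that the right-hand side of the Picone inequality collapses to a constant multiple of the weighted $L^p$ norm appearing on the left of \eqref{harun}.

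Concretely, fix $\gamma\in\left(0,\frac{Q-ps}{p-1}\right)$ and set $\omega(x)=q^{-\gamma}(x)$. Then $\omega>0$ on $\mathbb{G}\setminus\{0\}$, and by Lemma \ref{sh} we have $(-\Delta_{p,q})^{s}\omega=\mu(\gamma)\,q^{-ps-\gamma(p-1)}(x)=:\nu(x)$ a.e., where $\mu(\gamma)>0$; moreover $\nu\in L^{1}_{loc}(\mathbb{G}\setminus\{0\})$ since $\nu$ has only a power singularity at the origin. Applying Lemma \ref{Picone} with this $\omega$ and any $u\in C_{0}^{\infty}(\mathbb{G})$ gives
\begin{equation*}
\frac{1}{2}[u]^{p}_{s,p,q}\geq\left<(-\Delta_{p,q})^{s}\omega,\frac{|u|^{p}}{\omega^{p-1}}\right>=\int_{\mathbb{G}}\nu(x)\,\frac{|u(x)|^{p}}{\omega^{p-1}(x)}\,dx.
\end{equation*}
Now $\omega^{p-1}(x)=q^{-\gamma(p-1)}(x)$, so $\nu(x)/\omega^{p-1}(x)=\mu(\gamma)\,q^{-ps-\gamma(p-1)}(x)\cdot q^{\gamma(p-1)}(x)=\mu(\gamma)\,q^{-ps}(x)$, and the weight $\gamma$ cancels exactly. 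Hence
\begin{equation*}
\frac{1}{2}[u]^{p}_{s,p,q}\geq\mu(\gamma)\int_{\mathbb{G}}\frac{|u(x)|^{p}}{q^{ps}(x)}\,dx,
\end{equation*}
which is precisely \eqref{harun} after multiplying by $2$.

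The main technical obstacle is verifying that the hypotheses of Lemma \ref{Picone} genuinely apply to $\omega=q^{-\gamma}$: strictly speaking, $\omega$ is not a priori in $W_{0}^{s,p,q}(\Omega)$ for $\Omega=\mathbb{G}$, and it is singular at the origin, so one should either run the argument on $\Omega=\mathbb{G}\setminus B_{q}(0,\varepsilon)$ with a cutoff and pass to the limit $\varepsilon\to 0$ (using that $u$ has compact support away from where the estimate degenerates, or a density/truncation argument as in Lemma \ref{3.4}), or invoke the pointwise form of the Picone inequality $R(x,y)\ge 0$ directly without the functional-space wrapper. I would carry out the cutoff-and-limit version: pick $\omega_{\varepsilon}=q^{-\gamma}$ localized suitably, apply Lemma \ref{Picone}, use Lemma \ref{sh} to identify $(-\Delta_{p,q})^{s}\omega_{\varepsilon}$ away from the origin, and then let $\varepsilon\to 0$ with Fatou's lemma on the right-hand side to recover the full integral $\int_{\mathbb{G}}|u|^{p}q^{-ps}\,dx$. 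The constant $\mu(\gamma)$ is positive and finite by Lemma \ref{sh}, so \eqref{harun} follows, and one may optimize over $\gamma\in\left(0,\frac{Q-ps}{p-1}\right)$ to get the sharpest constant.
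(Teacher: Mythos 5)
Your proposal follows essentially the same route as the paper: apply the Picone-type inequality (Lemma \ref{Picone}) with $\omega=q^{-\gamma}$, use Lemma \ref{sh} to identify $(-\Delta_{p,q})^{s}(q^{-\gamma})$, and observe that the powers of $q$ cancel to leave $\mu(\gamma)\int_{\mathbb{G}}|u|^{p}q^{-ps}\,dx$. Your added remark about the fact that $q^{-\gamma}\notin W_{0}^{s,p,q}(\mathbb{G})$, and the proposed truncation-and-limit fix, is a legitimate gap in the paper's own (one-line) argument that the authors silently pass over, so your version is if anything more careful.
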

\begin{proof}
Let $u\in C^{\infty}_{0}(\mathbb{G})$ and $\gamma<\frac{Q-ps}{p-1}$. By Lemma \ref{sh} and Lemma  \ref{Picone} we establish that
\begin{equation*}
\frac{1}{2}[u]^{p}_{s,p,q}=\frac{1}{2}\int_{\mathbb{G}}\int_{\mathbb{G}}\frac{|u(x)-u(y)|^{p}}{q^{Q+ps}(y^{-1}\circ x)}dxdy\geq\left<(-\Delta_{p,q})^{s}(q^{-\gamma}(x)),\frac{|u(x)|^{p}}{q^{-\gamma(p-1)}(x)}\right>
\end{equation*}
\begin{equation}
=\mu(\gamma)\int_{\mathbb{G}}\frac{|u(x)|^{p}}{q^{ps}(x)}dx.
\end{equation}
It completes the proof of Theorem \ref{harun1}.
\end{proof}

\section{Lyapunov-type Inequality}
\label{SEC:3}
In this section we prove a Lyapunov-type inequality for the fractional $p$-sub-Laplacian with a homogeneous Dirichlet boundary problem on  $\mathbb{G}$.  Let $p>1$ and $s\in(0,1)$ be such that $Q>sp$ and $\Omega\subset\mathbb{G}$ be a Haar measurable set. We denote by $r_{\Omega,q}$ the inner quasi-radius of $\Omega$, that is,
\begin{equation}
r_{\Omega,q}=\max\{q(x):\,\,x\in\Omega\}.
\end{equation}
Let us consider
 \begin{equation}\label{fplapg}
 \begin{cases}
   (-\Delta_{p,q})^{s}u(x)=\omega|u(x)|^{p-2}u(x),\,\,x\in\Omega,\\
   u(x)=0,\,\,\,\,x\in\mathbb{G}\setminus\Omega,
 \end{cases}
\end{equation}
 where $\omega \in L^{\infty}(\Omega)$.
A function $u\in W_{0}^{s,p,q}(\Omega)$ is called a weak solution of the problem \eqref{fplapg} if
\begin{equation}
\int_{\Omega}\int_{\Omega}\frac{|u(x)-u(y)|^{p-2}(u(x)-u(y))(v(x)-v(y))}{q^{Q+sp}(y^{-1}\circ x)}dxdy=\int_{\Omega}\omega(x)|u(x)|^{p-2}u(x)v(x)dx
\end{equation}
for all $v\in W_{0}^{s,p,q}(\Omega)$,
 \begin{thm}\label{thmlyap} Let $\Omega\subset \mathbb{G}$ be a Haar measurable set.
 Let $\omega\in L^{\theta}(\Omega)$ be a non-negative weight with $\frac{Q}{sp}<\theta<\infty$. Suppose that problem \eqref{fplapg} with $Q>ps$ has a non-trivial weak solution $u\in W_{0}^{s,p,q}(\Omega)$. Then, we have
 \begin{equation}\label{lyapineq}
 \|\omega\|_{L^{\theta}(\Omega)}\geq\frac{C}{r_{\Omega,q}^{sp-Q/\theta}},
 \end{equation}
 where $C=C(Q,p,s,q)>0$.
 \end{thm}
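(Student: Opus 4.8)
The plan is to use $u$ itself as a test function and then chain the fractional Sobolev inequality (Theorem \ref{sob}) with Hölder's inequality and a volume bound in terms of $r_{\Omega,q}$. Since $r_{\Omega,q}=\max\{q(x):x\in\Omega\}$ is finite, $\Omega\subseteq B_{q}(0,r_{\Omega,q})$, so $|\Omega|<\infty$ and, extending $u$ by zero outside $\Omega$, $u$ is compactly supported; hence Theorem \ref{sob} applies to $u$ (for $u\in C_{0}^{\infty}(\Omega)$ directly, and then for $u\in W_{0}^{s,p,q}(\Omega)$ by density), giving
\[
\|u\|_{L^{p^{*}}(\Omega)}^{p}=\|u\|_{L^{p^{*}}(\mathbb{G})}^{p}\le C_{1}\,[u]_{s,p,q}^{p},\qquad p^{*}=\frac{Qp}{Q-sp}.
\]
Taking $v=u$ in the weak formulation of \eqref{fplapg} yields the energy identity
\[
[u]_{s,p,q}^{p}=\int_{\Omega}\omega(x)\,|u(x)|^{p}\,dx .
\]

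Next I would bound the right-hand side from above by Hölder's inequality applied twice. First, with the exponents $\theta$ and $\theta'=\theta/(\theta-1)$,
\[
\int_{\Omega}\omega\,|u|^{p}\,dx\le \|\omega\|_{L^{\theta}(\Omega)}\,\Big(\int_{\Omega}|u|^{p\theta'}\,dx\Big)^{1/\theta'}=\|\omega\|_{L^{\theta}(\Omega)}\,\|u\|_{L^{p\theta'}(\Omega)}^{p}.
\]
The condition $\frac{Q}{sp}<\theta<\infty$ is precisely what guarantees $p\le p\theta'\le p^{*}$, since $\theta'\le \frac{Q}{Q-sp}$ is equivalent to $\theta\,sp\ge Q$. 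Therefore, as $|\Omega|<\infty$, a second application of Hölder's inequality (the inclusion $L^{p^{*}}(\Omega)\hookrightarrow L^{p\theta'}(\Omega)$) gives $\|u\|_{L^{p\theta'}(\Omega)}\le |\Omega|^{\frac{1}{p\theta'}-\frac{1}{p^{*}}}\,\|u\|_{L^{p^{*}}(\Omega)}$, and combining this with the two previous displays,
\[
[u]_{s,p,q}^{p}=\int_{\Omega}\omega\,|u|^{p}\,dx\le \|\omega\|_{L^{\theta}(\Omega)}\,|\Omega|^{\frac{1}{\theta'}-\frac{p}{p^{*}}}\,\|u\|_{L^{p^{*}}(\Omega)}^{p}\le C_{1}\,\|\omega\|_{L^{\theta}(\Omega)}\,|\Omega|^{\frac{1}{\theta'}-\frac{p}{p^{*}}}\,[u]_{s,p,q}^{p}.
\]

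Since $u$ is non-trivial and supported in the finite-measure set $\Omega$, we have $\|u\|_{L^{p^{*}}(\Omega)}>0$ and hence $[u]_{s,p,q}>0$; dividing by $[u]_{s,p,q}^{p}$ gives $\|\omega\|_{L^{\theta}(\Omega)}\ge C_{1}^{-1}\,|\Omega|^{\frac{p}{p^{*}}-\frac{1}{\theta'}}$. A direct computation gives $\frac{p}{p^{*}}-\frac{1}{\theta'}=\frac{Q-sp}{Q}-\big(1-\frac{1}{\theta}\big)=-\frac{1}{Q}\big(sp-\frac{Q}{\theta}\big)$, and the hypothesis $\theta>\frac{Q}{sp}$ makes $sp-\frac{Q}{\theta}>0$, so this is a negative power of $|\Omega|$. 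Finally, by the polar-coordinates formula \eqref{EQ:polar} one has $|\Omega|\le|B_{q}(0,r_{\Omega,q})|=c_{Q}\,r_{\Omega,q}^{Q}$ with $c_{Q}=\sigma(\omega_{Q})/Q$; plugging this in (the exponent is negative, so the inequality is preserved) yields
\[
\|\omega\|_{L^{\theta}(\Omega)}\ge C_{1}^{-1}\big(c_{Q}\,r_{\Omega,q}^{Q}\big)^{-\frac{1}{Q}(sp-Q/\theta)}=\frac{C}{r_{\Omega,q}^{\,sp-Q/\theta}},
\]
which is \eqref{lyapineq}.

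I expect the only delicate point to be the consistency between the Gagliardo energy appearing in the weak formulation of \eqref{fplapg} and the seminorm $[u]_{s,p,q}$ to which Theorem \ref{sob} applies — this is where one uses that $u$ is extended by zero on $\mathbb{G}\setminus\Omega$ and that $C_{0}^{\infty}(\Omega)$ is dense in $W_{0}^{s,p,q}(\Omega)$ — together with verifying the admissible range $p\le p\theta'\le p^{*}$ forced by $\theta>Q/(sp)$. The remaining steps (the two Hölder estimates and the exponent arithmetic) are routine.
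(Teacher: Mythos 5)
Your proof is correct, but it takes a genuinely different route from the paper's. The paper does not test the equation with $u$ and then use the embedding $L^{p^{*}}(\Omega)\hookrightarrow L^{p\theta'}(\Omega)$; instead it sets $\beta=p\theta'=\alpha p+(1-\alpha)p^{*}$ and applies H\"older's inequality to the weighted integral $\int_{\Omega}|u|^{\beta}q^{-\alpha sp}(x)\,dx$, splitting it into a Hardy factor $\bigl(\int_{\Omega}|u|^{p}q^{-sp}(x)\,dx\bigr)^{\alpha}$ and a Sobolev factor $\bigl(\int_{\Omega}|u|^{p^{*}}dx\bigr)^{1-\alpha}$, controls both by $[u]_{s,p,q}$ via Theorems \ref{harun1} and \ref{sob}, and extracts the radius from the pointwise bound $q(x)\le r_{\Omega,q}$ on $\Omega$. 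Your argument dispenses with the Hardy inequality (and hence with Lemmas \ref{nineq}--\ref{sh}) altogether, using only the Sobolev inequality together with the volume bound $|\Omega|\le \frac{\sigma(\omega_{Q})}{Q}r_{\Omega,q}^{Q}$; your exponent arithmetic is right ($\theta>Q/(sp)$ is exactly $p\theta'\le p^{*}$, and $p/p^{*}-1/\theta'=-(sp-Q/\theta)/Q$), and your intermediate estimate $\|\omega\|_{L^{\theta}(\Omega)}\ge C\,|\Omega|^{-(sp-Q/\theta)/Q}$ is formally sharper, since it implies the stated inequality. The one point you flag is real: the cancellation of $[u]_{s,p,q}^{p}$ requires identifying the energy of the weak formulation with the full seminorm over $\mathbb{G}\times\mathbb{G}$, whereas the paper's displayed weak formulation integrates only over $\Omega\times\Omega$, which is strictly smaller because of the cross terms $\int_{\Omega}\int_{\mathbb{G}\setminus\Omega}$; however, the paper's own proof makes exactly the same identification ($[u]_{s,p,q}^{p}=\int_{\Omega}\omega|u|^{p}dx$), so this is a shared convention about how the Dirichlet problem is posed rather than a defect specific to your argument.
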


 \begin{proof}
Let us define
$$\beta=\alpha p+(1-\alpha)p^{*},$$
where $\alpha=\frac{\theta-\theta/sp}{\theta-1}\in(0,1)$ and  $p^{*}$ is the Sobolev conjugate exponent as in Theorem \ref{sob}. Let $\beta=p\theta'$ with $1/\theta+1/\theta'=1$. Then, we have
\begin{equation}\label{inn}
\int_{\Omega}\frac{|u(x)|^{\beta}}{r^{\alpha sp}_{\Omega,q}}dx\leq\int_{\Omega}\frac{|u(x)|^{\beta}}{q^{\alpha sp}(x)}dx.
\end{equation}
Now, H\"{o}lder's inequality with exponents $\nu=\alpha^{-1}$ and $1/\nu+1/\nu'=1$ gives
\begin{equation}
\int_{\Omega}\frac{|u(x)|^{\beta}}{q^{\alpha sp}(x)}dx\leq\int_{\Omega}\frac{|u(x)|^{\alpha p}|u(x)|^{(1-\alpha) p^{*}}}{q^{\alpha sp}(x)}dx\leq\left(\int_{\Omega}\frac{|u(x)|^{p}}{q^{ sp}(x)dx}\right)^{\alpha}\left(\int_{\Omega}|u(x)|^{p^{*}}dx\right)^{1-\alpha}.
\end{equation}
Then, by using Theorem \ref{sob} and \ref{harun1}, we obtain that
\begin{equation*}
\int_{\Omega}\frac{|u(x)|^{\beta}}{q^{\alpha sp}(x)}dx\leq C^{\alpha}_{1}\left(\int_{\Omega}\int_{\Omega}\frac{|u(x)-u(y)|^{p}}{q^{Q+sp}(y^{-1}\circ x)}dxdy\right)^{\alpha/p}\,\, C^{(1-\alpha)p^{*}/p}_{2}[u]_{s,p,q}^{(1-\alpha)p^{*}/p}
\end{equation*}
\begin{equation*}
\leq C^{\alpha}_{1}[u]_{s,p,q}^{\alpha}C^{(1-\alpha)p^{*}/p}_{2}[u]_{s,p,q}^{(1-\alpha)p^{*}/p}=C \left([u]_{s,p,q}^{p}\right)^{(\alpha p+(1-\alpha)p^{*})/p}=C\left(\int_{\Omega}\omega(x)|u(x)|^{p}dx\right)^{\theta'}
\end{equation*}
\begin{equation*}
\leq C\left(\int_{\Omega}\omega^{\theta}(x)dx\right)^{\theta'/\theta}\int_{\Omega}|u(x)|^{p\theta'}dx=C\|\omega\|^{\theta'}_{L^{\theta}(\Omega)}\int_{\Omega}|u(x)|^{\beta}dx.
\end{equation*}
That is, we have
\begin{equation*}
\int_{\Omega}\frac{|u(x)|^{\beta}}{q^{\alpha sp}(x)}dx\leq C\|\omega\|^{\theta'}_{L^{\theta}(\Omega)}\int_{\Omega}|u(x)|^{\beta}dx.
\end{equation*}
Thus, from \eqref{inn} we get
\begin{equation}
\frac{1}{r^{\alpha sp}_{\Omega,q}}\int_{\Omega}|u(x)|^{\beta}dx\leq \int_{\Omega}\frac{|u(x)|^{\beta}}{q^{\alpha sp}(x)}dx\leq C\|\omega\|^{\theta'}_{L^{\theta}(\Omega)}\int_{\Omega}|u(x)|^{\beta}dx.
\end{equation}
Finally, we arrive at
\begin{equation}
\frac{C}{r_{\Omega,q}^{sp-Q/\theta}}\leq \|\omega \|_{L^{\theta}(\Omega)}.
\end{equation}
Theorem \ref{thmlyap} is proved.
 \end{proof}

 Let consider the following spectral problem for the non-linear, fractional $p$-sub-Laplacian $(-\Delta_{p,q})^{s},\,\,\,1<p<\infty,\,\,s\in(0,1),$ with Dirichlet boundary condition:
 \begin{equation}\label{spl}
 \begin{cases}
   (-\Delta_{p,q})^{s}u=\lambda|u|^{p-2}u,\,\,x\in\Omega,\\
   u(x)=0,\,\,x\in \mathbb{G}\setminus\Omega.
 \end{cases}
\end{equation}
We have the following Rayleigh quotient for the fractional Dirichlet $p$-sub-Laplacian (cf. \cite{Kir})

\begin{equation}\label{rq}
 \lambda_{1}=\inf_{u\in W_{0}^{s,p,q}(\Omega),\,\,u\neq0}\frac{[u]^{p}_{s,p,q}}{\|u\|^{p}_{L^{p}(\mathbb{G})}}.
\end{equation}
As a consequence of Theorem \ref{thmlyap} we obtain
\begin{thm}\label{sfplap}
Let $\lambda_{1}$ be the first eigenvalue of  problem \eqref{spl} given by \eqref{rq}. Let $Q>sp,\,\,s\in(0,1)$ and $1<p<\infty.$ Then we have
\begin{equation}
\lambda_{1}\geq \sup_{\frac{Q}{sp}<\theta<\infty}\frac{C}{|\Omega|^{\frac{1}{\theta}}r_{\Omega,q}^{sp-Q/{\theta}}},
\end{equation}
where $C$ is a positive constant given in Theorem \ref{thmlyap}, $|\cdot|$ is the Haar measure and $r_{\Omega,q}$ is the inner quasi-radius of  $\Omega$.
\end{thm}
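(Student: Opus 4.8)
The plan is to read the eigenvalue equation \eqref{spl} as the special case of the Dirichlet problem \eqref{fplapg} in which the weight $\omega$ is the constant function $\omega\equiv\lambda_{1}$, and then to invoke the Lyapunov-type inequality of Theorem \ref{thmlyap} with this particular weight. Concretely, once we know that \eqref{spl} possesses a non-trivial weak solution $u_{1}\in W^{s,p,q}_{0}(\Omega)$ realising the infimum in \eqref{rq}, Theorem \ref{thmlyap} immediately gives $\|\omega\|_{L^{\theta}(\Omega)}\geq C\,r_{\Omega,q}^{-(sp-Q/\theta)}$ for every $\theta\in(Q/sp,\infty)$, and since $\omega$ is constant this reads precisely $\lambda_{1}|\Omega|^{1/\theta}\geq C\,r_{\Omega,q}^{-(sp-Q/\theta)}$; dividing by $|\Omega|^{1/\theta}$ and taking the supremum over $\theta$ yields the claimed bound.

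To make this rigorous I would first verify the two hypotheses of Theorem \ref{thmlyap} for $\omega\equiv\lambda_{1}$. Non-negativity of the weight follows because $[u]^{p}_{s,p,q}\geq 0$ and $\|u\|^{p}_{L^{p}(\mathbb{G})}>0$ for $u\neq 0$, so $\lambda_{1}\geq 0$ by \eqref{rq}. The integrability $\omega\in L^{\theta}(\Omega)$ for all finite $\theta$ holds because $\Omega$ is bounded — being contained in the quasi-ball $B_{q}(0,r_{\Omega,q})$ — hence of finite Haar measure, and in that case $\|\omega\|_{L^{\theta}(\Omega)}=(\int_{\Omega}\lambda_{1}^{\theta}\,dx)^{1/\theta}=\lambda_{1}|\Omega|^{1/\theta}$. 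The remaining ingredient is the existence of an eigenfunction: by the direct method of the calculus of variations, using the compactness of the embedding $W^{s,p,q}_{0}(\Omega)\hookrightarrow L^{p}(\Omega)$ on the bounded set $\Omega$ (a consequence of the Sobolev-type estimate of Theorem \ref{sob}), a minimiser $u_{1}$ of the Rayleigh quotient \eqref{rq} exists, is non-trivial, and is a weak solution of \eqref{spl}; alternatively one may simply cite the corresponding statement in \cite{Kir}. With $u_{1}$ in hand, $u_{1}$ is a non-trivial weak solution of \eqref{fplapg} with $\omega\equiv\lambda_{1}$, and Theorem \ref{thmlyap} applies verbatim.

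The main obstacle is exactly this last point: Theorem \ref{thmlyap} is formulated for genuine weak solutions, not for minimising sequences, so the argument really needs the infimum in \eqref{rq} to be attained; everything else is routine bookkeeping. Granting that, the conclusion is immediate — for every $\theta\in(Q/sp,\infty)$ one gets $\lambda_{1}\geq C\,|\Omega|^{-1/\theta}\,r_{\Omega,q}^{-(sp-Q/\theta)}$, and taking $\sup_{Q/sp<\theta<\infty}$ on the right-hand side gives Theorem \ref{sfplap}, with $C=C(Q,p,s,q)>0$ the same constant as in Theorem \ref{thmlyap}.
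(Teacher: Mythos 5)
Your proposal is correct and follows essentially the same route as the paper: substitute the constant weight $\omega\equiv\lambda_{1}$ into the Lyapunov-type inequality of Theorem \ref{thmlyap}, compute $\|\lambda_{1}\|_{L^{\theta}(\Omega)}=\lambda_{1}|\Omega|^{1/\theta}$, and take the supremum over $\theta\in(Q/sp,\infty)$. Your additional remarks on the non-negativity of $\lambda_{1}$, the finiteness of $|\Omega|$, and the attainment of the infimum in \eqref{rq} address hypotheses that the paper's own proof leaves implicit, but they do not change the argument.
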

\begin{proof}
In Theorem \ref{thmlyap} taking $\omega=\lambda\in L^{\theta}(\Omega)$ and using Lyapunov-type inequality \eqref{lyapineq}, we get that
\begin{equation}
\|\omega \|_{L^{\theta}(\Omega)}=\|\lambda \|_{L^{\theta}(\Omega)}=\left(\int_{\Omega}\lambda^{\theta} dx\right)^{1/\theta}\geq\frac{C}{r_{\Omega,q}^{sp-Q/\theta}}.
\end{equation}
For every $\theta>\frac{Q}{sp}$, we have
\begin{equation}
\lambda_{1}\geq \frac{C}{|\Omega|^{\frac{1}{\theta}}r_{\Omega,q}^{sp-Q/{\theta}}}.
\end{equation}
Thus, we establish
\begin{equation}
\lambda_{1}\geq \sup_{\frac{Q}{sp}<\theta<\infty}\frac{C}{|\Omega|^{\frac{1}{\theta}}r_{\Omega,q}^{sp-Q/{\theta}}},
\end{equation}
 for all $\frac{Q}{sp}<\theta<\infty$.
Theorem \ref{sfplap} is proved.
\end{proof}

\end{document}